%%%

\documentclass[12pt]{amsart}
\usepackage{amsfonts,latexsym,amsthm,amssymb,graphicx}
\usepackage{wrapfig}
\usepackage[all]{xy}
\usepackage{hyperref}

\usepackage[usenames]{color} 

\DeclareFontFamily{OT1}{rsfs}{}
\DeclareFontShape{OT1}{rsfs}{n}{it}{<-> rsfs10}{}
\DeclareMathAlphabet{\mathscr}{OT1}{rsfs}{n}{it}
\setlength{\textwidth}{6 in}
\setlength{\textheight}{8.75 in}
\setlength{\topmargin}{-0.25in}
\setlength{\oddsidemargin}{0.25in}
\setlength{\evensidemargin}{0.25in}

\CompileMatrices

\newtheorem{theorem}{Theorem}[section]
\newtheorem{lemma}[theorem]{Lemma}
\newtheorem{corol}[theorem]{Corollary}
\newtheorem{prop}[theorem]{Proposition}

{\theoremstyle{definition} \newtheorem{defin}[theorem]{Definition}}
{\theoremstyle{remark} \newtheorem{remark}[theorem]{Remark}
\newtheorem{example}[theorem]{Example}}

\newcommand{\Cbb}{{\mathbb{C}}}
\newcommand{\Pbb}{{\mathbb{P}}}
\newcommand{\one}{1\hskip-3.5pt1}
\newcommand{\csm}{{c_{\text{SM}}}}
\DeclareMathOperator{\coker}{coker}
\DeclareMathOperator{\im}{im}
\newcommand{\qede}{\hfill$\lrcorner$}
\DeclareMathOperator{\Der}{Der}
\DeclareMathOperator{\Ext}{Ext}
\DeclareMathOperator{\Hom}{Hom}
\DeclareMathOperator{\Spla}{Splay}
\newcommand{\cI}{{\mathscr I}}
\newcommand{\cO}{{\mathscr O}}

%%%

\title{
Splayed divisors and their Chern classes
}
\author{Paolo Aluffi}
\author{Eleonore Faber}
\address{
Mathematics Department, 
Florida State University,
Tallahassee FL 32306, U.S.A.
}
\email{aluffi@math.fsu.edu}
\address{
Institut f\"ur Mathematik,
Nordbergstra\ss e 15,
1090 Wien,
\"Osterreich
}
\email{eleonore.faber@univie.ac.at}

\begin{document}

\begin{abstract}
We obtain several new characterizations of splayedness for divisors: a Leibniz
property for ideals of singularity subschemes, the vanishing of a `splayedness'
module, and the requirements that certain natural morphisms of modules and
sheaves of logarithmic derivations and logarithmic differentials be isomorphisms.
We also consider the effect of splayedness on the Chern classes of sheaves of 
differential forms with logarithmic poles along splayed divisors, as well as on the 
Chern-Schwartz-MacPherson classes of the complements of these divisors.
A postulated relation between these different notions of Chern class leads
to a conjectural identity for Chern-Schwartz-MacPherson classes of splayed
divisors and subvarieties, which we are able to verify in several template situations.
\end{abstract}

\maketitle

%%%

\section{Introduction}\label{intro}
Two divisors in a nonsingular variety $V$ are {\em splayed\/} at a point $p$ if their
local equations at $p$
may be written in terms of disjoint sets of analytic coordinates.
Splayed divisors are transversal in a very strong sense; indeed, splayedness
may be considered a natural generalization of transversality for possibly singular
divisors (and subvarieties of higher codimension). In previous work, the 
second-named author has obtained several characterizing properties for 
splayedness (\cite{Faber12}). For example, two divisors are splayed at $p$ if 
and only if their {\em Jacobian ideals\/} satisfy a `Leibniz property' (\cite{Faber12}, 
Proposition~8); and if and only if the corresponding modules of logarithmic 
derivations at $p$ span the module of ordinary derivations for $V$ at $p$
(\cite{Faber12}, Proposition~15).

In this paper we refine some of these earlier results, and consider implications
for different notions of {\em Chern classes\/} associated with divisors.
Specifically, we strengthen the first result recalled above, by showing that
splayedness is already characterized by the Leibniz property after restriction to
the union of the divisors; equivalently, this amounts to a Leibniz property for
the ideals defining the {\em singularity subschemes\/} for the divisors at $p$
(Corollary~\ref{corol:Jpsplayed} and~\ref{corol:Jsplayed}).
We introduce a `splayedness module', which we describe both in terms of 
these ideals and in terms of modules of logarithmic derivations
(Definition~\ref{Def:splayedmod}, Proposition~\ref{prop:splayedmod}), and whose 
vanishing is equivalent to splayedness. Thus, this module quantifies precisely
the failure of splayedness of two divisors meeting at a point.

These results may be expressed in terms of the quality of certain 
natural morphisms associated with two divisors. For example, given two 
divisors $D_1$, $D_2$ meeting at a point $p$ 
and without common components,
there is a natural monomorphism
\[
\frac{\Der_{V,p}}{\Der_{V,p}(-\log(D_1\cup D_2))} \hookrightarrow
\frac{\Der_{V,p}}{\Der_{V,p}(-\log D_1)}\oplus
\frac{\Der_{V,p}}{\Der_{V,p}(-\log D_2)}
\]
involving quotients of modules of logarithmic derivations.
We prove that $D_1$ and $D_2$ are splayed at $p$ if and only if this monomorphism
is an isomorphism (Theorem~\ref{Thm:Dermodcrit}). We also prove an analogous
statement involving sheaves of {\em logarithmic differentials\/} 
(Theorem~\ref{Thm:Omegamodcrit}) giving a partial 
answer to a question raised in~\cite{Faber12}, but only subject to the vanishing of
an $\Ext$ module: $D_1$ and~$D_2$ are splayed if the natural inclusion
\begin{equation}\label{eq:natinclusion}
\Omega^1_{V,p}(\log D_1) + \Omega^1_{V,p}(\log D_2)\subseteq \Omega^1_{V,p}(\log D)
\end{equation}
is an equality and $\Ext^1_\cO(\Omega^1_{V,p}(\log D),\cO) =0$.
Thus, if $D$ is free at $p$, then $D_1$ and~$D_2$ are splayed at $p$ if and only if
the two modules in \eqref{eq:natinclusion} are equal. In general this condition 
alone does not imply splayedness, as Example~\ref{Ex:additivomega} shows.

One advantage of expressing splayedness in terms of these morphisms is that the
characterizing conditions globalize nicely, and give conditions on morphisms of
{\em sheaves\/} of logarithmic derivations and differentials characterizing splayedness
at all points of intersection of two divisors. These conditions imply identities involving
Chern classes for these sheaves (Corollary~\ref{cor:Chern}). In certain situations 
(for example in the case
of curves on surfaces) these identities actually characterize splayedness. 
Also, there is a different notion of `Chern class' that can be associated with a 
divisor~$D$ in a nonsingular variety $V$, namely the Chern-Schwartz-MacPherson 
($\csm$) class of the 
complement $V\smallsetminus D$. (See~\S\ref{CSMintro} for a rapid reminder of
this notion). In previous work, the first-named author has determined several situations
where this $\csm$ class {\em equals\/} the Chern class $c(\Der_V(-\log D))$ of 
the sheaf of logarithmic differentials. It is then natural to expect that $\csm$
classes of complements of splayed divisors, and more general subvarieties,
should satisfy a similar type of relations as the one obtained for ordinary
Chern classes of sheaves of derivations. From this point of view we analyze 
three template sources
of splayed subvarieties: subvarieties defined by pull-backs from factors of a 
product (Proposition~\ref{pro:products}), joins of projective varieties 
(Proposition~\ref{pro:joins}), and the case of curves (Proposition~\ref{pro:curves}). 
In each of the three  
situations we are able to verify explicitly that the corresponding expected relation
of $\csm$ classes does hold. We hope to come back to the question of the validity
of this relation for arbitrary splayed subvarieties in future work.

The new characterizations for splayedness are given in \S\ref{sec:splay},
together with the implications for Chern classes of sheaves of logarithmic
derivations for splayed divisors. The conjectured expected relation for
$\csm$ classes of complements, together with some necessary background
material, is presented in~\S\ref{sec:CSMsplayed}.
\smallskip

{\em Acknowledgements:\/} 
We thank J\"org Sch\"urmann and Mathias Schulze for comments on a previous
version of this paper.
P.~A.~thanks Caltech for hospitality during the preparation of this note. 
E.~F.~has been supported by a For Women in Science award 2011 of 
L'Or{\'e}al Austria, the Austrian commission for UNESCO and the Austrian Academy 
of Sciences and by the Austrian Science Fund (FWF) in frame of project P21461.

%%%

\section{Splayedness}\label{sec:splay}

\subsection{}
Let $V$ be a smooth complex projective variety of dimension~$n$.
We say that two divisors $D_1$ and $D_2$ in $V$ are {\em splayed\/} at a point 
$p$ if there exist complex analytic coordinates $x_1,\ldots,x_n$ at $p$ such
that $D_1$, $D_2$ have defining equations $g(x_1, \ldots, x_k,0, \ldots, 0)=0$, 
$h(0, \ldots, 0, x_{k+1}, \ldots, x_n)=0$
 at $p$, 
where $1 \leq k < n$. Here $g,h \in \cO^{\text{an}}_{V,p}\cong 
{\mathbb C}\{x_1, \ldots, x_n\}$. 
We simply say that $D_1$ and $D_2$ are {\em splayed\/} if they are splayed
at $p$ for every $p\in D_1\cap D_2$.

The {\em Jacobian ideal\/} $J_f$ of $f\in \mathbb C\{x_1,\ldots , x_n\}$ is the ideal 
generated by its partial derivatives, i.e., 
$J_f=(\partial_{x_1}f, \ldots, \partial_{x_n}f)$. 
We will also consider the ideal $J'_f=J_f+(f)$. 
A function $f$ is called {\em Euler-homogeneous at $p$\/} if $J'_f=J_f$, that is, if
there exists a derivation $\delta$ such that $f=\delta f$.

Unlike the Jacobian ideal, the ideal $J'_f$ only depends on the associate class of
$f$: if $u$ is a unit, then $J'_{uf}=J'_f$. This implies that the
ideal $J'_f$ globalizes, in the sense that if
$D$ is a divisor with local equation $f_p=0$ at $p$, the ideals 
$J'_{f_p}\subseteq \cO^{\text{an}}_{V,p}$ for $p\in V$ determine 
a subscheme $JD$ of $D$, which we call the {\em singularity subscheme\/}
of $D$.

In other words, the ideal sheaf of $JD$ in $D$ is the image of the natural morphism of 
sheaves
\[
\xymatrix{
\Der_V(-D) \ar[r] & \cO_D
}
\]
defined by applying derivations to local equations for $D$. The kernel of the
corresponding morphism $\Der_V \to \cO_D(D)$ defines the sheaf of
{\em logarithmic derivations\/}
(or {\em logarithmic vector fields\/}) with respect to $D$.
We briefly recall the notions of logarithmic derivations and differential forms and free 
divisors, following \cite{Saito80}. Let $D \subseteq V$ be a divisor that is locally at $p$ given 
by $\{f=0\}$. A derivation $\delta \in \Der_{V,p}$ at $p$ is {\em logarithmic along $D$\/}
if the germ $\delta(f)$ is contained in the ideal $(f)$ of $\cO_{V,p}$.
The module of germs of logarithmic derivations (along $D$) at $p$ is denoted by 
\[  
\Der_{V,p}(-\log D)=\{  \delta: \delta \in \Der_{V,p} \text{ such that }\delta f \in (f)\cO_{V,p}  \}. 
\]
These modules are the stalks at points $p$ of the sheaf $\Der_V(-\log D)$ of 
$\cO_{V}$-modules. Similarly we define logarithmic differential forms: a meromorphic 
$q$-form $\omega$ is \emph{logarithmic} (along $D$) at $p$ if $\omega f$ and 
$fd\omega$ are holomorphic (or equivalently if $\omega f$ and $df \wedge \omega$ 
are holomorphic) at $p$.
 We denote
\[\Omega^q_{V,p}(\log D)= \{ \omega: \omega \text{ germ of a logarithmic $q$-form at $p$} \}.\] 
Again, this notion globalizes and yields a coherent sheaf $\Omega^q_{V}(\log D)$ of $\cO_V$-modules.
One can show that $\Der_{V,p}(-\log D)$ and $\Omega^1_{V,p}(\log D)$ are reflexive 
$\cO_{V,p}$-modules dual to each other
(see \cite{Saito80}, Corollary~1.7).  
The germ $(D,p)$ is called \emph{free} if
$\Der_{V,p}(-\log D)$ resp. $\Omega_{V,p}^1(\log D)$ is a free $\cO_{V,p}$-module. The divisor $D$ is called a \emph{free divisor} if $(D,p)$ is free at every point $p \in V$.

In terms of $\cO=\cO^{\text{an}}_{V,p}$-modules of 
derivations, if $D$ has equation $f=0$ at $p$, then there is an exact 
sequence of $\cO$-modules
\begin{equation}\label{seq:localder}
\xymatrix{
0 \ar[r] & \Der_{V,p}(-\log D) \ar[r] & \Der_{V,p} \ar[r] & J'_f/(f)
\ar[r] & 0\quad.
}
\end{equation}
This sequence is the local analytic aspect of the sequence of coherent 
$\cO_V$-modules
\begin{equation}\label{seq:globalder}
\xymatrix{
0 \ar[r] & \Der_V(-\log D) \ar[r] & \Der_V \ar[r] & \cI_{JD,D}(D)
\ar[r] & 0\quad.
}
\end{equation}
where $\cI_{JD,D}$ is the ideal sheaf of $JD$ in $D$ (see e.g., 
\cite{MR2359100}, \S2).

\subsection{}
Equivalent conditions for splayedness in terms of Jacobian ideals and
modules of logarithmic derivations are explored in \cite{Faber12}. 
In this section we reinterpret some of the results of \cite{Faber12},
and discuss other characterizations.

Let $\cO=\mathbb C\{x_1,\dots,x_n\}$. We will assume throughout that $D_1$
and $D_2$ are divisors defined by $g,h\in \cO$ respectively, where $g$ and $h$
are reduced and without common components. According to Proposition~8 
in~\cite{Faber12}, $D_1$ and $D_2$ are splayed if and only if 
(up to possibly multiplying $g$ and $h$ by units)
$J_{gh}$ satisfies 
the Leibniz property:
\[
J_{gh}=g J_h+h J_g\quad.
\]
We will prove that this equality is equivalent to the same property for $J'$:
\[
J'_{gh}=h J'_h+h J_g'\quad,
\]
and interpret this equality in terms of modules of derivations.

\begin{lemma}\label{Lem:injmorJp}
If $g,h\in \cO$ have no components in common, then there is an injective
homomorphism of $R$-modules
\[
\varphi\colon \frac{J'_g}{(g)} \oplus \frac{J'_h}{(h)} \hookrightarrow \frac{\cO}{(gh)}
\]
given by multiplication by $h$ on the first factor and by $g$ on the second.
The image of~$\varphi$ contains $J'_{gh}/(gh)$.
\end{lemma}

\begin{proof}
The homomorphism $\varphi$ is given by 
\[
(a + (g),b + (h)) \mapsto ah + bg \mod (gh)\quad.
\]
It is clear that $\varphi$ is well-defined. To verify that $\varphi$ is injective
under the assumption that $g,h$ do not contain a common factor, assume
$\varphi(a,b) = 0 \mod (gh)$; then $ah +bg= c gh$ for some representatives
$a,b \in \cO$ and $c \in \cO$. This implies that $ah\in (g)$, and hence $a\in (g)$
since $g$ and $h$ have no common factors. By the same
token, $b\in (h)$. Hence $a$ and $b$ have to be zero in $J_g'/(g)$ and $J_h'/(h)$.
Finally, $J'_{gh}/(gh)$ is generated by $\partial_{x_i}(gh) \mod (gh)$.
Since $\partial_{x_i}(gh)=h\partial_{x_i} g+g \partial_{x_i} h\in hJ_g+gJ_h$,
we see that $\partial_{x_i}(gh)+(gh)\in hJ'_g/(gh)+gJ'_h/(gh)=\im \varphi$,
and hence $J'_{gh}/(gh)\subseteq \im \varphi$, as claimed.
\end{proof}

The following result expresses the splayedness condition in terms of the 
morphism~$\varphi$ of Lemma~\ref{Lem:injmorJp}.

\begin{theorem}\label{Thm:Jpcrit}
Let $D_1$, $D_2$ be divisors of $V$, given by $g=0$, $h=0$ at $p$, 
where $g$ and~$h$ have no common components. Then 
$D_1$ and $D_2$ are splayed at $p$ if and only if the morphism $\varphi$ of
Lemma~\ref{Lem:injmorJp} induces an isomorphism
\[
\frac{J'_g}{(g)} \oplus \frac{J'_h}{(h)} \cong \frac{J'_{gh}}{(gh)}
\quad.\]
\end{theorem}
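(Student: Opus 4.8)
The plan is to first use Lemma~\ref{Lem:injmorJp} to recast the statement as an equality of ideals, and then to recognize that equality as a property of modules of logarithmic derivations that is already known to characterize splayedness. Since $g$ and $h$ share no factors, $\varphi$ is injective by Lemma~\ref{Lem:injmorJp}, so it induces an isomorphism onto its image $\im\varphi=(hJ'_g+gJ'_h)/(gh)$. The same lemma gives $J'_{gh}/(gh)\subseteq\im\varphi$, while the reverse containment amounts to $hJ'_g+gJ'_h\subseteq J'_{gh}$ (using $(gh)\subseteq J'_{gh}$). Therefore $\varphi$ induces an isomorphism onto $J'_{gh}/(gh)$ if and only if
\[
J'_{gh}=hJ'_g+gJ'_h,
\]
the Leibniz property for $J'$. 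Note that, unlike the Leibniz property for $J$, this identity is manifestly invariant under replacing $g,h$ by unit multiples, since $J'$ depends only on the associate class; so no ``up to units'' caveat is needed. Thus the theorem reduces to showing that $D_1$ and $D_2$ are splayed at $p$ if and only if this $J'$-Leibniz identity holds.

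To establish that equivalence I would pass to logarithmic derivations through the exact sequence~\eqref{seq:localder}, which identifies $J'_g/(g)$, $J'_h/(h)$ and $J'_{gh}/(gh)$ with $\Der_{V,p}/\Der_{V,p}(-\log D_1)$, $\Der_{V,p}/\Der_{V,p}(-\log D_2)$ and $\Der_{V,p}/\Der_{V,p}(-\log D)$ respectively, where $D=D_1\cup D_2$ has local equation $gh$; the isomorphisms send $\bar\delta$ to $\delta g$, $\delta h$, $\delta(gh)$ modulo the respective ideals. Under these identifications the Leibniz rule $\delta(gh)=h\,\delta g+g\,\delta h$ shows that $\varphi$ corresponds exactly to the natural monomorphism
\[
\iota\colon \frac{\Der_{V,p}}{\Der_{V,p}(-\log D)}\hookrightarrow \frac{\Der_{V,p}}{\Der_{V,p}(-\log D_1)}\oplus\frac{\Der_{V,p}}{\Der_{V,p}(-\log D_2)},\qquad \bar\delta\mapsto(\bar\delta,\bar\delta),
\]
whose injectivity reflects the elementary fact that $\Der_{V,p}(-\log D)=\Der_{V,p}(-\log D_1)\cap\Der_{V,p}(-\log D_2)$ when $g,h$ are coprime. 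A short diagram chase, using that $\varphi$ is injective and that $\varphi$ restricted to the diagonal image already surjects onto $J'_{gh}/(gh)$, then shows that $\varphi$ is an isomorphism onto $J'_{gh}/(gh)$ precisely when $\iota$ is surjective, that is, when $\Der_{V,p}(-\log D_1)+\Der_{V,p}(-\log D_2)=\Der_{V,p}$.

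This last condition is exactly the criterion of Proposition~15 in~\cite{Faber12}: the two modules of logarithmic derivations span $\Der_{V,p}$ if and only if $D_1$ and $D_2$ are splayed at $p$. Invoking it closes both implications at once. I expect the main obstacle to be the direction from the ideal identity to splayedness: it cannot be extracted by manipulating the ideals $J'$ alone, since the spurious $(gh)$ summand blocks any direct passage to the corresponding identity for $J$, and so a purely ideal-theoretic argument does not suffice. The decisive step is therefore the translation into logarithmic derivations together with the verification that $\varphi$ coincides with $\iota$, which is what makes the geometric content of~\cite{Faber12} available. The forward direction, by contrast, can alternatively be checked by a direct computation in splayed coordinates, where $g=g(x_1,\dots,x_k)$ and $h=h(x_{k+1},\dots,x_n)$ give $\partial_{x_i}(gh)=h\,\partial_{x_i}g$ or $g\,\partial_{x_i}h$ immediately, yielding the $J$-Leibniz and hence the $J'$-Leibniz identity.
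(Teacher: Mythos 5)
Your proof is correct and takes essentially the same route as the paper's: both pass through the exact sequence \eqref{seq:localder} to identify $J'_g/(g)$, $J'_h/(h)$, $J'_{gh}/(gh)$ with quotients of $\Der_{V,p}$ by modules of logarithmic derivations, recognize the map in question as the natural diagonal monomorphism (using $\Der_{V,p}(-\log D_1)\cap\Der_{V,p}(-\log D_2)=\Der_{V,p}(-\log D)$), and conclude by the equivalence of splayedness with $\Der_{V,p}(-\log D_1)+\Der_{V,p}(-\log D_2)=\Der_{V,p}$ from Proposition~15 of \cite{Faber12}. Your intermediate reformulation of the isomorphism condition as the ideal identity $J'_{gh}=hJ'_g+gJ'_h$ is a harmless repackaging which the paper records separately (as Corollary~\ref{corol:Jpsplayed}, via Proposition~\ref{prop:splayedmod}).
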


\begin{proof}
By Lemma~\ref{Lem:injmorJp}, there is an injective homomorphism
\[
\iota\colon \frac{J'_{gh}}{(gh)} \hookrightarrow 
\frac{J'_g}{(g)} \oplus \frac{J'_h}{(h)}\quad.
\]
This morphism $\iota$ is the unique homomorphism such that the composition
\[
\xymatrix{
\displaystyle \frac{J'_{gh}}{(gh)} \ar@{^(->}[r]^-\iota &
\displaystyle \frac{J'_g}{(g)} \oplus \frac{J'_h}{(h)} \ar@{^(->}[r]^-\varphi &
\displaystyle \frac{\cO}{(gh)}
}
\]
is the natural inclusion $r + (gh) \mapsto r + (gh)$. Explicitly, an element of
$J'_{gh}/(gh)$ is of the form $\delta(gh) + (gh)$, for a derivation $\delta$.
Since $\delta(gh)=h\delta(g) + g\delta(h)$, and
\[
h\delta(g) + g\delta(h) + (gh) = \varphi( \delta(g) + (g), \delta(h) + (h))\quad,
\]
we see that
\[
\iota( \delta(gh) + (gh) ) = (\delta(g) + (g), \delta(h) + (h))\quad.
\]
In other words, $\iota$ is the morphism induced by the compatibility
with the morphisms of $\Der$ modules: it is the unique homomorphism 
making the following diagram commute:
\[
\xymatrix{
\displaystyle \frac{J'_{gh}}{(gh)} \ar@{^(->}[r]^\iota &
\displaystyle \frac{J'_g}{(g)} \oplus \frac{J'_h}{(h)} \\
\displaystyle \frac{\Der_{V,p}}{\Der_{V,p}(-\log D)} \ar[u]_\cong \ar@{^(->}[r] &
\displaystyle \frac{\Der_{V,p}}{\Der_{V,p}(-\log D_1)} \oplus \frac{\Der_{V,p}}{\Der_{V,p}(-\log D_2)} 
\ar[u]^\cong
}
\]
where $D=D_1\cup D_2$, and the vertical isomorphisms are induced by 
sequence~\eqref{seq:localder}.
The monomorphism in the bottom row is induced from the natural morphism
\[
\xymatrix{
\Der_{V,p} \ar[r] &
\displaystyle \frac{\Der_{V,p}}{\Der_{V,p}(-\log D_1)} \oplus 
\frac{\Der_{V,p}}{\Der_{V,p}(-\log D_2)}\quad,
}
\]
using the fact that the kernel of this morphism is 
$\Der_{V,p}(-\log D_1) \cap \Der_{V,p}(-\log D_2)$, and 
by Seidenberg's theorem $\Der_{V,p}(-\log D_1) \cap \Der_{V,p}(-\log D_2) = 
\Der_{V,p}(-\log D)$, see e.g., \cite{HM93}, p.~313 or~\cite{MR1217488}, Proposition~4.8.
This morphism (and hence $\iota$) is onto if and only if $\Der_{V,p}= 
\Der_{V,p}(-\log D_1) + \Der_{V,p}(-\log D_2)$, 
and this condition is satisfied if and only if $D_1$ and $D_2$ are splayed at $p$ by 
Proposition~15 of \cite{Faber12}. The statement follows.
\end{proof}

\subsection{}
The argument given above may be recast as follows. As $\Der_{V,p}(-\log D_1)$
and $\Der_{V,p}(-\log D_2)$ are submodules of $\Der_{V,p}$, whose intersection is
$\Der_{V,p}(D)$, we have an exact sequence
\begin{multline*}
0 \longrightarrow \frac{\Der_{V,p}}{\Der_{V,p}(-\log D)} \longrightarrow
\frac{\Der_{V,p}}{\Der_{V,p}(-\log D_1)} \oplus \frac {\Der_{V,p}}{\Der_{V,p}(-\log D_2)} \\
\longrightarrow \frac{\Der_{V,p}}{\Der_{V,p}(-\log D_1)+\Der_{V,p}(-\log D_2)}
\longrightarrow 0
\end{multline*}
The last quotient may be viewed as a measure of the `failure of splayedness at $p$'.

\begin{defin}\label{Def:splayedmod}
The {\em splayedness module\/} for $D_1$ and $D_2$ at $p$ is the quotient
\[
\Spla_p(D_1,D_2):=\frac {\Der_{V,p}}{\Der_{V,p}(-\log D_1)+\Der_{V,p}(-\log D_2)}\quad.
\]
\end{defin}
By Proposition~15 in~\cite{Faber12}, $D_1$ and $D_2$ are splayed at $p$
if and only if their splayedness module at $p$ vanishes. Equivalently:

\begin{theorem}\label{Thm:Dermodcrit}
Let $D_1$, $D_2$ be reduced divisors of $V$, without common components, and
let $D=D_1\cup D_2$. Then there is a natural monomorphism of modules
\[
\xymatrix{
\displaystyle
\frac{\Der_{V,p}}{\Der_{V,p}(-\log D)} \ar@{^(->}[r] &
\displaystyle
\frac{\Der_{V,p}}{\Der_{V,p}(-\log D_1)}\oplus \frac{\Der_{V,p}}{\Der_{V,p}(-\log D_2)}\quad,
}
\]
and $D_1$, $D_2$ are splayed at $p$ if and only if this monomorphism is an 
isomorphism.
\end{theorem}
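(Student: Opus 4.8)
The plan is to recognize Theorem~\ref{Thm:Dermodcrit} as essentially a repackaging of the exact sequence displayed just before Definition~\ref{Def:splayedmod}, together with the already-recorded equivalence between splayedness and the vanishing of the splayedness module. First I would construct the monomorphism: the modules $\Der_{V,p}(-\log D_1)$ and $\Der_{V,p}(-\log D_2)$ both sit inside $\Der_{V,p}$, and by Seidenberg's theorem their intersection equals $\Der_{V,p}(-\log D)$ (this is the same fact invoked in the proof of Theorem~\ref{Thm:Jpcrit}). From this one obtains the natural map
\[
\Der_{V,p} \longrightarrow \frac{\Der_{V,p}}{\Der_{V,p}(-\log D_1)}\oplus \frac{\Der_{V,p}}{\Der_{V,p}(-\log D_2)},
\]
whose kernel is precisely $\Der_{V,p}(-\log D_1)\cap\Der_{V,p}(-\log D_2)=\Der_{V,p}(-\log D)$. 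The induced map on the quotient by this kernel is therefore injective, which is exactly the asserted monomorphism.

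Next I would identify the cokernel of this monomorphism with the splayedness module. The image of the map is $(\Der_{V,p}(-\log D_1)+\Der_{V,p}(-\log D_2))/\cdots$ inside the direct sum, and a short diagram chase (or directly the three-term exact sequence stated in the text) shows the cokernel is canonically
\[
\frac{\Der_{V,p}}{\Der_{V,p}(-\log D_1)+\Der_{V,p}(-\log D_2)}=\Spla_p(D_1,D_2).
\]
Thus the monomorphism is an isomorphism if and only if its cokernel vanishes, i.e.\ if and only if $\Spla_p(D_1,D_2)=0$, equivalently if and only if $\Der_{V,p}=\Der_{V,p}(-\log D_1)+\Der_{V,p}(-\log D_2)$.

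Finally I would close the loop using Proposition~15 of~\cite{Faber12}, which states that $D_1$ and $D_2$ are splayed at $p$ precisely when $\Der_{V,p}(-\log D_1)+\Der_{V,p}(-\log D_2)=\Der_{V,p}$, i.e.\ when the splayedness module vanishes. Combining this with the cokernel identification gives the stated equivalence: the monomorphism is an isomorphism if and only if $D_1$ and $D_2$ are splayed at $p$.

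Since all three ingredients---the exactness of the sequence preceding Definition~\ref{Def:splayedmod}, the Seidenberg intersection identity, and Proposition~15---are already available, I do not expect a serious obstacle; the content is organizational rather than technical. The only point requiring a little care is verifying that the map labeled as ``natural'' genuinely coincides with the one whose cokernel is $\Spla_p(D_1,D_2)$, so that ``isomorphism'' translates exactly into ``cokernel zero'' without an extraneous kernel or image discrepancy. This is immediate from the Seidenberg kernel computation, so the proof reduces to assembling these facts.
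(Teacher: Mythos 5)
Your proposal is correct and follows essentially the same route as the paper: the paper also derives the theorem from the Mayer--Vietoris-type exact sequence preceding Definition~\ref{Def:splayedmod} (whose injectivity rests on the Seidenberg identity $\Der_{V,p}(-\log D_1)\cap\Der_{V,p}(-\log D_2)=\Der_{V,p}(-\log D)$), identifying the cokernel with $\Spla_p(D_1,D_2)$ and invoking Proposition~15 of~\cite{Faber12} to equate its vanishing with splayedness. Your only slight imprecision is describing the image of the monomorphism as a quotient of $\Der_{V,p}(-\log D_1)+\Der_{V,p}(-\log D_2)$ (it is the diagonal-type image of $\Der_{V,p}$), but since you immediately fall back on the exact sequence, this does not affect the argument.
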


The splayedness module may be computed as follows:

\begin{prop}\label{prop:splayedmod}
With notation as above, the splayedness module is isomorphic to
\[
\frac {hJ'_g+gJ'_h}{J'_{gh}} \quad.
\]
\end{prop}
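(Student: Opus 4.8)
The plan is to identify the splayedness module with the cokernel of the monomorphism $\iota$ and then transport that cokernel to the ideal-theoretic side by means of the injection $\varphi$ of Lemma~\ref{Lem:injmorJp}. To begin, the short exact sequence displayed just before Definition~\ref{Def:splayedmod} exhibits $\Spla_p(D_1,D_2)$ as the cokernel of the monomorphism
\[
\frac{\Der_{V,p}}{\Der_{V,p}(-\log D)} \hookrightarrow
\frac{\Der_{V,p}}{\Der_{V,p}(-\log D_1)}\oplus \frac{\Der_{V,p}}{\Der_{V,p}(-\log D_2)}.
\]
The commutative diagram appearing in the proof of Theorem~\ref{Thm:Jpcrit}, whose vertical arrows are the isomorphisms induced by sequence~\eqref{seq:localder}, identifies this monomorphism with
\[
\iota\colon \frac{J'_{gh}}{(gh)} \hookrightarrow \frac{J'_g}{(g)}\oplus\frac{J'_h}{(h)}.
\]
Hence $\Spla_p(D_1,D_2)\cong\coker\iota$, and it suffices to compute this cokernel.

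Next I would exploit that $\varphi$ is injective (Lemma~\ref{Lem:injmorJp}). By its definition $\varphi(a+(g),b+(h))=ah+bg \bmod (gh)$, so $\varphi$ restricts to an isomorphism
\[
\frac{J'_g}{(g)}\oplus\frac{J'_h}{(h)}\ \xrightarrow{\ \sim\ }\ \im\varphi=\frac{hJ'_g+gJ'_h}{(gh)};
\]
the target makes sense because $g\in J'_g$ forces $(gh)\subseteq hJ'_g$. Moreover the composition $\varphi\circ\iota$ is the natural inclusion $r+(gh)\mapsto r+(gh)$, as recorded in the proof of Theorem~\ref{Thm:Jpcrit}. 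Therefore $\varphi$ carries the submodule $\im\iota$ isomorphically onto $J'_{gh}/(gh)$, viewed inside $\im\varphi$; the containment $J'_{gh}/(gh)\subseteq\im\varphi$ needed for this is precisely the last assertion of Lemma~\ref{Lem:injmorJp}.

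Finally, passing to quotients along the isomorphism $\varphi$ and invoking the third isomorphism theorem gives
\[
\Spla_p(D_1,D_2)\cong\coker\iota\cong\frac{\im\varphi}{\varphi(\im\iota)}
=\frac{(hJ'_g+gJ'_h)/(gh)}{J'_{gh}/(gh)}\cong\frac{hJ'_g+gJ'_h}{J'_{gh}},
\]
which is the desired identification. I do not anticipate a genuine obstacle: the substance is already contained in Lemma~\ref{Lem:injmorJp} and in the diagram of Theorem~\ref{Thm:Jpcrit}. The only points demanding care are the bookkeeping of images---checking that $\varphi$ sends $\im\iota$ exactly onto $J'_{gh}/(gh)$ and not onto a larger submodule---and verifying the containments $(gh)\subseteq hJ'_g+gJ'_h$ and $J'_{gh}\subseteq hJ'_g+gJ'_h$ that render the displayed quotients well defined.
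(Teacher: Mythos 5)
Your proposal is correct and follows essentially the same route as the paper's proof: identify $\Spla_p(D_1,D_2)$ with $\coker\iota$ via the diagram from Theorem~\ref{Thm:Jpcrit}, then use $\varphi$ to realize the direct sum as $(hJ'_g+gJ'_h)/(gh)$ and $\im\iota$ as $J'_{gh}/(gh)$, and conclude by the third isomorphism theorem. The only difference is that you spell out the bookkeeping (well-definedness of the quotients, $\varphi(\im\iota)=J'_{gh}/(gh)$) that the paper leaves implicit.
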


\begin{proof}
Via the identification used in the proof of Theorem~\ref{Thm:Jpcrit}, the monomorphism
appearing in Theorem~\ref{Thm:Dermodcrit} is the homomorphism
\[
\iota\colon \frac{J'_{gh}}{(gh)} \hookrightarrow 
\frac{J'_g}{(g)} \oplus \frac{J'_h}{(h)}\quad.
\]
Therefore, the cokernels are isomorphic; this shows that $\coker\iota$ is isomorphic
to the splayedness module. To determine $\coker\iota$, use the monomorphism
$\varphi$ of Lemma~\ref{Lem:injmorJp} to identify the direct sum with a submodule
of $\cO/(gh)$; this submodule is immediately seen to equal $(hJ'_g+gJ'_h)/(gh)$.
Use this identification to view $\iota$ as acting
\[
\frac{J'_{gh}}{(gh)} \hookrightarrow \frac{hJ'_g+gJ'_h}{(gh)}\quad;
\]
it is then clear that $\coker\iota$ is isomorphic to the module given in the statement.
\end{proof}

\begin{corol}\label{corol:Jpsplayed}
With notation as above, $D_1$ and $D_2$ are splayed at $p$ if and only if
$J'_{gh} = hJ'_g + g J'_h$.
\end{corol}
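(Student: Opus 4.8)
The plan is to deduce this corollary immediately from the two preceding results, so that essentially no new work is required. First I would recall that, by the discussion following Definition~\ref{Def:splayedmod} (which relies on Proposition~15 of~\cite{Faber12}), the divisors $D_1$ and $D_2$ are splayed at $p$ if and only if $\Spla_p(D_1,D_2)=0$. Thus the entire task reduces to translating the vanishing of this module into the asserted equality of ideals.

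Next I would invoke Proposition~\ref{prop:splayedmod}, which supplies a canonical isomorphism
\[
\Spla_p(D_1,D_2)\cong \frac{hJ'_g+gJ'_h}{J'_{gh}}\quad.
\]
For this quotient to be meaningful one needs the inclusion $J'_{gh}\subseteq hJ'_g+gJ'_h$, and I would record this explicitly, since it is what makes ``vanishing'' equivalent to ``equality'' rather than to a one-sided containment. This inclusion follows from the Leibniz rule $\partial_{x_i}(gh)=h\,\partial_{x_i}g+g\,\partial_{x_i}h$, which places every generator of $J_{gh}$ inside $hJ_g+gJ_h\subseteq hJ'_g+gJ'_h$, together with $gh=h\cdot g\in hJ'_g$ (using $g\in J'_g$), which accounts for the extra generator of $J'_{gh}=J_{gh}+(gh)$.

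With this inclusion in hand, the quotient module $(hJ'_g+gJ'_h)/J'_{gh}$ is zero precisely when its numerator equals its denominator, that is, when $J'_{gh}=hJ'_g+gJ'_h$. Combining this with the equivalence ``splayed $\iff$ $\Spla_p(D_1,D_2)=0$'' from the first step yields exactly the stated criterion. I do not expect any genuine obstacle here: all of the substantive content---Seidenberg's theorem, the exact sequence~\eqref{seq:localder}, and the explicit cokernel computation---has already been absorbed into Proposition~\ref{prop:splayedmod}, and the only point deserving care is confirming the defining inclusion $J'_{gh}\subseteq hJ'_g+gJ'_h$, so that the final equivalence is literally an equality of ideals rather than a containment that could fail to be reversible.
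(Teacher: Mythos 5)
Your proof is correct and is essentially the paper's own (implicit) argument: the corollary follows directly from Proposition~\ref{prop:splayedmod} combined with the fact that splayedness at $p$ is equivalent to the vanishing of $\Spla_p(D_1,D_2)$. Your extra care in verifying the inclusion $J'_{gh}\subseteq hJ'_g+gJ'_h$ is a nice touch, though the paper already records it in Lemma~\ref{Lem:injmorJp} (the image of $\varphi$ contains $J'_{gh}/(gh)$).
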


\begin{corol}\label{corol:Jsplayed}
With notation as above, $D_1$ and $D_2$ are splayed at $p$ if and only if
$J_{gh}+(gh)=hJ_g + gJ_h+(gh)$.
\end{corol}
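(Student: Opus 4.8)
The plan is to deduce this corollary directly from Corollary~\ref{corol:Jpsplayed}, by translating the condition on the ideals $J'$ into the corresponding condition on the ordinary Jacobian ideals $J$. Since $J'_f = J_f + (f)$ by definition, the whole task amounts to bookkeeping of how the added principal ideals combine. I would begin by rewriting the left-hand side of the condition in Corollary~\ref{corol:Jpsplayed}: simply expanding $J'_{gh} = J_{gh} + (gh)$ already produces the left-hand side $J_{gh}+(gh)$ of the asserted equality.

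Next I would expand the two summands on the right. Multiplying $J'_g = J_g + (g)$ by $h$ gives $hJ'_g = hJ_g + (hg) = hJ_g + (gh)$, and likewise $gJ'_h = gJ_h + (gh)$. Adding these two submodules, the repeated principal ideal $(gh)$ collapses to a single copy, so that
\[
hJ'_g + gJ'_h = hJ_g + gJ_h + (gh)\quad.
\]
Comparing with the expansion of the left-hand side, the equality $J'_{gh} = hJ'_g + gJ'_h$ of Corollary~\ref{corol:Jpsplayed} is seen to be literally the same statement as $J_{gh}+(gh) = hJ_g + gJ_h + (gh)$, whence the corollary follows.

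There is no genuine obstacle in this argument; it is a formal consequence of the previous corollary together with the definition of $J'$. The only points that require a moment's care are purely notational: that $(hg)$ and $(gh)$ denote the same principal ideal, and that adjoining $(gh)$ twice does not enlarge the sum. These observations are exactly what make the passage between the two formulations an identity of ideals rather than a mere implication in one direction, so both directions of the ``if and only if'' are inherited at once from Corollary~\ref{corol:Jpsplayed}.
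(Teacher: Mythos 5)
Your proposal is correct and is essentially the paper's own proof: the paper disposes of this corollary with the single line ``This is a restatement of Corollary~\ref{corol:Jpsplayed},'' and your expansion of $J'_{gh}=J_{gh}+(gh)$, $hJ'_g=hJ_g+(gh)$, $gJ'_h=gJ_h+(gh)$ is exactly the bookkeeping that justifies that remark.
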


\begin{proof}
This is a restatement of Corollary~\ref{corol:Jpsplayed}.
\end{proof}

\subsection{}\label{Ssec:JvsJp}
As recalled above, Proposition~8 from~\cite{Faber12} states that $D_1$ and $D_2$ 
are splayed at~$p$ if and only if $J_{gh}=hJ_g + gJ_h$
up to multiplying $g$ and $h$ by units.
Corollary~\ref{corol:Jsplayed} 
strengthens this result, as it shows that the weaker condition that these two ideals 
are equal {\em modulo $(gh)$\/} suffices to imply splayedness.
In fact, this gives an alternative proof of Proposition~8 from~\cite{Faber12}: as
\begin{align*}
\text{$D_1$ and $D_2$ are splayed at $p$} & \implies 
\text{$J_{gh} = h J_g + gJ_h$ 
(for suitable choices of $g$ and $h$)
}\\
& \implies J_{gh} = h J_g + g J_h \mod (gh) \\
& \implies \text{$D_1$ and $D_2$ are splayed at $p$}
\end{align*}
(the first two implications are immediate, and the third is given by 
Corollary~\ref{corol:Jsplayed}), these conditions are all equivalent.
Also, note that the conditions expressed in Corollaries~\ref{corol:Jpsplayed}
and~\ref{corol:Jsplayed} are insensitive to multiplications by units. Indeed, if
$f\in\cO$ and $u$ is a unit, then $J'_{fu}=J'_f$. In general, $J_{fu}\ne J_f$.

\begin{remark}\label{rem:Eulhom}
The implication 
\[
J_{gh} = hJ_g + gJ_h \mod (gh) \implies J_{gh} = hJ_g + gJ_h
\]
is straightforward if $gh$ is Euler homogeneous at $p$, and then it does not 
require a particular choice of $g$, $h$ defining $D_1$, $D_2$. Indeed, the
inclusion $J_{gh} \subseteq hJ_g + gJ_h$ always holds; to verify the other
inclusion, let $\delta_1$, $\delta_2$ be derivations, and consider
$h \delta_1 g + g \delta_2 h$.
By the equality $J_{gh} = hJ_g + gJ_h \mod (gh)$, there exists a derivation $\delta$
and an element $a$ such that
\[
h \delta_1 g + g \delta_2 h = \delta(gh) + agh \ .
\]
If $gh$ is Euler homogeneous, we can find a derivation $\varepsilon$ such that
$gh=\varepsilon(gh)$; thus
\[
h \delta_1 g + g \delta_2 h = (\delta+a \varepsilon) (gh)\ ,
\]
and this shows $hJ_g + gJ_h\subseteq J_{gh}$ as $\delta_1$ and $\delta_2$
were arbitrary.
\qede\end{remark}

\begin{remark}
In view of Proposition~8 from \cite{Faber12}, it is natural to ask whether the module
$(hJ_g+g J_h)/J_{gh}$ may be another realization of the splayedness module.
This is not the case. Examples may be constructed by considering Euler-homogeneous
functions $g$, $h$ (i.e., assume $J'_g=J_g$, $J'_h=J_h$) such that the product is not 
Euler-homogeneous: 
concretely, one
may take $g=x^3+y^2$ and $h=x^5+y^7$.
For such functions, the splayedness module is
\[
\frac{hJ'_g+gJ'_h}{J'_{gh}}=\frac{hJ_g+gJ_h}{J'_{gh}}\quad;
\]
$(hJ_g+gJ_h)/J_{gh}$ surjects onto this module, but not isomorphically since the
kernel $J'_{gh}/J_{gh}$ is nonzero if $gh$ is not Euler-homogeneous.

On the other hand, $(hJ_g+gJ_h)/J_{gh}$ {\em does\/} equal the splayedness module
if $gh$ is Euler homogeneous. Indeed, the equality $J_{gh}=J'_{gh}$ implies that
$hJ'_g + gJ'_h \subseteq h J_g+gJ_h$ by the same argument used in 
Remark~\ref{rem:Eulhom}, and the other inclusion is always true.
\qede\end{remark}

\subsection{}
It is natural to ask whether splayedness can be expressed in terms of
logarithmic {\em differential forms,\/} in the style of 
Theorem~\ref{Thm:Dermodcrit} (cf.~Question~16 in~\cite{Faber12}).
A partial answer to this question will be given in 
Theorem~\ref{Thm:Omegamodcrit} below.
We first give a precise (but a little obscure) 
translation of splayedness in terms of a morphism of $\Ext$ modules
of modules of differential forms. We consider the natural epimorphism
\[
\Omega^1_{V,p}(\log D_1)\oplus \Omega^1_{V,p}(\log D_2) \longrightarrow
\Omega^1_{V,p}(\log D_1) + \Omega^1_{V,p}(\log D_2) 
\]
and the induced morphism
\begin{equation}\label{eq:Extmorph}
\Ext^1_{\cO}(\Omega^1_V(\log D_1) + \Omega^1_V(\log D_2),\cO) \longrightarrow
\Ext^1_{\cO}(\Omega^1_V(\log D_1)\oplus \Omega^1_V(\log D_2),\cO)\quad.
\end{equation}

\begin{prop}\label{prop:Extcond}
Let $D_1$, $D_2$ be reduced divisors of $V$, without common components, and
let $D=D_1\cup D_2$. Then the morphism \eqref{eq:Extmorph} is an epimorphism,
and its kernel is the splayedness module: there is an exact sequence
\begin{multline*}
0 \longrightarrow
\Spla_p(D_1,D_2) \longrightarrow
\Ext^1_{\cO}(\Omega^1_V(\log D_1) + \Omega^1_V(\log D_2),\cO) \\ \longrightarrow
\Ext^1_{\cO}(\Omega^1_V(\log D_1)\oplus \Omega^1_V(\log D_2),\cO) \longrightarrow 0
\end{multline*}
\end{prop}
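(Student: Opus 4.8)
The plan is to connect the epimorphism of modules of logarithmic differential forms to the monomorphism of $\Der$-quotients already understood in Theorem~\ref{Thm:Dermodcrit}, via the duality between $\Der_{V,p}(-\log D)$ and $\Omega^1_{V,p}(\log D)$ recalled from \cite{Saito80}. First I would apply $\Hom_\cO(-,\cO)$ to the short exact sequence
\[
\xymatrix{
0 \ar[r] & \ker \ar[r] & \Omega^1_{V,p}(\log D_1)\oplus \Omega^1_{V,p}(\log D_2) \ar[r] & \Omega^1_{V,p}(\log D_1)+\Omega^1_{V,p}(\log D_2) \ar[r] & 0
}
\]
to produce a long exact $\Ext$-sequence. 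The duality $\Hom_\cO(\Omega^1_{V,p}(\log D_i),\cO)\cong \Der_{V,p}(-\log D_i)$ identifies the $\Hom$-terms with modules of logarithmic derivations, so the connecting maps in the long exact sequence should recover exactly the inclusion $\Der_{V,p}(-\log D)\hookrightarrow \Der_{V,p}(-\log D_1)\oplus \Der_{V,p}(-\log D_2)$ appearing (after passing to quotients) in Theorem~\ref{Thm:Dermodcrit}.

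The key steps, in order, are: (i) identify the kernel of the displayed epimorphism, which by reflexivity and the Seidenberg-type intersection formula $\Der_{V,p}(-\log D_1)\cap\Der_{V,p}(-\log D_2)=\Der_{V,p}(-\log D)$ used in the proof of Theorem~\ref{Thm:Jpcrit} should be dual to $\Der_{V,p}(-\log D)$; (ii) write out the long exact sequence in $\Ext^\bullet_\cO(-,\cO)$ and use the duality to rewrite the $\Hom$-level terms as the $\Der$-modules; (iii) read off that the image of the first $\Ext^1$-map, modulo the image of the $\Hom$-terms, is controlled by the cokernel of $\Der_{V,p}\to \Der_{V,p}(-\log D_1)\oplus\Der_{V,p}(-\log D_2)$ restricted appropriately, so that the kernel of \eqref{eq:Extmorph} is precisely $\Spla_p(D_1,D_2)$; and (iv) verify surjectivity of \eqref{eq:Extmorph}, which should follow because the next term $\Ext^1_\cO$ of the kernel module vanishes or is absorbed, since $\Der_{V,p}(-\log D)$ is reflexive and we are over a regular local ring of dimension $n$.

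The main obstacle I expect is bookkeeping in step (iii): tracking exactly how the splayedness module $\Der_{V,p}/(\Der_{V,p}(-\log D_1)+\Der_{V,p}(-\log D_2))$ emerges from the long exact sequence as a kernel rather than a cokernel, and confirming that the naturally induced map is the morphism \eqref{eq:Extmorph} and not merely abstractly isomorphic to it. One must be careful that the duality functor $\Hom_\cO(-,\cO)$ is only exact enough to give a four-term or longer exact sequence, so the identification of $\Spla_p(D_1,D_2)$ with $\ker$ of \eqref{eq:Extmorph} hinges on showing the relevant $\Hom$-to-$\Hom$ map is surjective onto $\Der_{V,p}(-\log D_1)\oplus\Der_{V,p}(-\log D_2)$ with cokernel exactly $\Spla_p$, which again invokes Seidenberg's theorem and the reflexivity statement from \cite{Saito80}, Corollary~1.7.
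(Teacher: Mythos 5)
Your overall plan is indeed the paper's: dualize the short exact sequence relating the direct sum and the sum of the modules of logarithmic differentials, and read both claims off the resulting long exact $\Ext$-sequence using Saito's duality. However, your step~(i) contains a genuine error, and the rest of the argument hinges on it. The kernel of the epimorphism is the intersection $\Omega^1_{V,p}(\log D_1)\cap\Omega^1_{V,p}(\log D_2)$, and since $D_1$ and $D_2$ have no common components this intersection is $\Omega^1_{V,p}$ itself: a meromorphic form lying in both modules has its polar divisor contained in both $D_1$ and $D_2$, hence has no poles at all. It is \emph{not} dual to $\Der_{V,p}(-\log D)$. Seidenberg's theorem $\Der_{V,p}(-\log D_1)\cap\Der_{V,p}(-\log D_2)=\Der_{V,p}(-\log D)$ is the statement dual to the \emph{sum}, not to the intersection: it identifies $\bigl(\Omega^1_{V,p}(\log D_1)+\Omega^1_{V,p}(\log D_2)\bigr)^\vee$ with $\Der_{V,p}(-\log D)$ (this is exactly Remark~\ref{Rem:dualplus}), i.e.\ the first nonzero term of the dualized sequence, not the kernel you are trying to compute.

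This misidentification is not a bookkeeping issue, because both assertions of the proposition come precisely from the freeness of the kernel $\Omega^1_{V,p}$. Dualizing the correct sequence, Saito's duality turns the middle term into $\Der_{V,p}(-\log D_1)\oplus\Der_{V,p}(-\log D_2)$ and the kernel term into $\Der_{V,p}$, so the long exact sequence ends with
\[
\Der_{V,p} \to \Ext^1_{\cO}(\Omega^1_{V}(\log D_1)+\Omega^1_{V}(\log D_2),\cO) \to \Ext^1_{\cO}(\Omega^1_{V}(\log D_1)\oplus\Omega^1_{V}(\log D_2),\cO) \to \Ext^1_{\cO}(\Omega^1_{V,p},\cO)=0\,,
\]
where the final vanishing holds because $\Omega^1_{V,p}$ is free ($V$ is nonsingular). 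This gives the surjectivity of \eqref{eq:Extmorph} at once, and identifies its kernel with the cokernel of $\Der_{V,p}(-\log D_1)\oplus\Der_{V,p}(-\log D_2)\to\Der_{V,p}$, which is $\Spla_p(D_1,D_2)$ by definition. (Note the direction: the natural map goes from the direct sum \emph{into} $\Der_{V,p}$; the map $\Der_{V,p}\to\Der_{V,p}(-\log D_1)\oplus\Der_{V,p}(-\log D_2)$ invoked in your step~(iii) does not exist.) If the kernel were dual to $\Der_{V,p}(-\log D)$, i.e.\ equal to $\Omega^1_{V,p}(\log D)$ as your step~(i) asserts, the last term above would be $\Ext^1_{\cO}(\Omega^1_{V,p}(\log D),\cO)$, which does \emph{not} vanish in general---its possible nonvanishing is precisely the obstruction appearing in Theorem~\ref{Thm:Omegamodcrit}, Example~\ref{Ex:additivomega} and Remark~\ref{rem:additivomega}---so neither the surjectivity nor the kernel computation would follow. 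Your fallback in step~(iv), that reflexivity over a regular local ring forces the relevant $\Ext^1$ to vanish, is also false: by Hartshorne's result the non-free locus of a reflexive module has codimension at least $3$, so this vanishing is automatic on surfaces (which is why the paper's corollary for curves is special) but fails in general in dimension $\geq 3$.
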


Therefore, $D_1$ and $D_2$ are splayed at $p$ if and only if the morphism
\eqref{eq:Extmorph} is an isomorphism, if and only if it is injective. 

\begin{proof}
Note that $\Omega^1_{V,p}(\log D_1)$, $\Omega^1_{V,p}(\log D_2)$ both
embed in $\Omega^1_{V,p}(\log D)$, and an elementary verification shows that
\[
\Omega^1_{V,p}(\log D_1) \cap \Omega^1_{V,p}(\log D_2)
=\Omega^1_{V,p}
\]
if $D_1$ and $D_2$ have no components in common. Indeed, the poles of a
form in the intersection would be on a divisor contained in both $D_1$ and 
$D_2$. By the previous assumption on $D_1$ and $D_2$ such a form can 
have no poles. We then get an exact sequence
\[
\xymatrix@C=20pt{
0 \ar[r] & \Omega^1_{V,p} \ar[r] &
\Omega^1_{V,p}(\log D_1) \oplus \Omega^1_{V,p}(\log D_2) \ar[r] &
\Omega^1_{V,p}(\log D_1) + \Omega^1_{V,p}(\log D_2) \ar[r] &
0
}
\]
Applying the dualization functor $\Hom_\cO(-,\cO)$ to this sequence gives 
the exact sequence
\begin{align*}
0 & \rightarrow 
(\Omega^1_{V,p}(\log D_1)+\Omega^1_{V,p}(\log D_2))^\vee \rightarrow
\Der_{V,p}(-\log D_1) \oplus \Der_{V,p}(-\log D_2) \rightarrow \Der_{V,p} \\
& \rightarrow
\Ext^1_{\cO}(\Omega^1_V(\log D_1) + \Omega^1_V(\log D_2),\cO) \rightarrow
\Ext^1_{\cO}(\Omega^1_V(\log D_1)\oplus \Omega^1_V(\log D_2),\cO) 
\rightarrow 0
\end{align*}
(the last $0$ is due to the fact that $\Der_{V,p}$ is free by the hypothesis of
nonsingularity of~$V$). This shows that the morphism \eqref{eq:Extmorph}
is an epimorphism, and identifies its kernel with the cokernel of the natural
morphism $\Der_{V,p}(-\log D_1) \oplus \Der_{V,p}(-\log D_2) \rightarrow 
\Der_{V,p}$, which is the splayedness module introduced in 
Definition~\ref{Def:splayedmod}.
\end{proof}

\begin{remark}\label{Rem:dualplus}
The argument also shows that 
$(\Omega^1_{V,p}(\log D_1)+\Omega^1_{V,p}(\log D_2))^\vee$
is isomorphic to $\Der_{V,p}(-\log D)$ regardless of splayedness. Indeed,
by the sequence obtained in the proof this dual is identified with
$\Der_{V,p}(-\log D_1) \cap \Der_{V,p}(-\log D_2)$, and this equals
$\Der_{V,p}(-\log D)$ by Seidenberg's theorem as recalled in the proof
of Theorem~\ref{Thm:Jpcrit}.
\qede\end{remark}

While the statement of Proposition~\ref{prop:Extcond} is precise, it seems
hard to apply. The following result translates this criterion in terms that
are more similar to those of Theorem~\ref{Thm:Dermodcrit}, but at the 
price of a hypothesis of freeness.

\begin{theorem}\label{Thm:Omegamodcrit}
Let $D_1$, $D_2$ be reduced divisors of $V$, without common components, and
let $D=D_1\cup D_2$. Then there is a natural monomorphism of modules
\[
\xymatrix{
\displaystyle
\frac{\Omega^1_{V,p}(\log D_1)\oplus \Omega^1_{V,p}(\log D_2)}{\Omega^1_{V,p}} 
\ar@{^(->}[r] &
\Omega^1_{V,p}(\log D)\quad.
}
\]
If $D_1$, $D_2$ are splayed at $p$, then this monomorphism is an isomorphism.

If $\Ext^1_{\cO}(\Omega^1_{V,p}(\log D),\cO)=0$ (for example, if $D$ is free at $p$), 
then the converse implication holds.
\end{theorem}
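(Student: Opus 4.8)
The plan is to deduce all three assertions by dualizing two short exact sequences (one for logarithmic forms, one for logarithmic derivations) with $\Hom_\cO(-,\cO)$ and tracking the connecting maps into $\Ext^1$. First I would produce the monomorphism. Since $D_1$, $D_2$ have no common components, the computation in the proof of Proposition~\ref{prop:Extcond} gives $\Omega^1_{V,p}(\log D_1)\cap\Omega^1_{V,p}(\log D_2)=\Omega^1_{V,p}$, hence the exact sequence
\[
0 \to \Omega^1_{V,p} \to \Omega^1_{V,p}(\log D_1)\oplus\Omega^1_{V,p}(\log D_2) \to \Omega^1_{V,p}(\log D_1)+\Omega^1_{V,p}(\log D_2) \to 0.
\]
This identifies the quotient on the left of the statement with the sum $\Omega^1_{V,p}(\log D_1)+\Omega^1_{V,p}(\log D_2)$, and the asserted map is the inclusion of this sum into $\Omega^1_{V,p}(\log D)$ (each summand has poles only along $D$). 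The map is therefore injective, and it is an isomorphism precisely when $\Omega^1_{V,p}(\log D_1)+\Omega^1_{V,p}(\log D_2)=\Omega^1_{V,p}(\log D)$.

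For the forward implication I would argue on derivations. Seidenberg's theorem gives $\Der_{V,p}(-\log D_1)\cap\Der_{V,p}(-\log D_2)=\Der_{V,p}(-\log D)$, and splayedness gives $\Der_{V,p}(-\log D_1)+\Der_{V,p}(-\log D_2)=\Der_{V,p}$ by Proposition~15 of \cite{Faber12}; together these yield the exact sequence
\[
0 \to \Der_{V,p}(-\log D) \to \Der_{V,p}(-\log D_1)\oplus\Der_{V,p}(-\log D_2) \to \Der_{V,p} \to 0.
\]
Applying $\Hom_\cO(-,\cO)$, using Saito duality $\Der_{V,p}(-\log\bullet)^\vee=\Omega^1_{V,p}(\log\bullet)$ and $\Der_{V,p}^\vee=\Omega^1_{V,p}$, and noting $\Ext^1_\cO(\Der_{V,p},\cO)=0$ since $\Der_{V,p}$ is free, I recover
\[
0 \to \Omega^1_{V,p} \to \Omega^1_{V,p}(\log D_1)\oplus\Omega^1_{V,p}(\log D_2) \to \Omega^1_{V,p}(\log D) \to 0.
\]
Surjectivity of the right-hand map says exactly that the sum of the two form modules equals $\Omega^1_{V,p}(\log D)$, i.e. the monomorphism is an isomorphism.

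For the converse under $\Ext^1_\cO(\Omega^1_{V,p}(\log D),\cO)=0$, I would run the dualization in the other direction. If the monomorphism is an isomorphism the form sequence above becomes a short exact sequence ending in $\Omega^1_{V,p}(\log D)$; dualizing it, with $\Omega^1_{V,p}(\log\bullet)^\vee=\Der_{V,p}(-\log\bullet)$ and $\Omega^1_{V,p}{}^\vee=\Der_{V,p}$, produces the exact sequence
\[
\Der_{V,p}(-\log D_1)\oplus\Der_{V,p}(-\log D_2) \to \Der_{V,p} \to \Ext^1_\cO(\Omega^1_{V,p}(\log D),\cO).
\]
The cokernel of the first arrow is exactly the splayedness module $\Spla_p(D_1,D_2)$ of Definition~\ref{Def:splayedmod}, so exactness embeds $\Spla_p(D_1,D_2)$ into $\Ext^1_\cO(\Omega^1_{V,p}(\log D),\cO)$; when this $\Ext^1$ vanishes (for instance when $D$ is free at $p$, so $\Omega^1_{V,p}(\log D)$ is free) the splayedness module vanishes and $D_1$, $D_2$ are splayed. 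The crux --- and the reason the $\Ext^1$ hypothesis cannot be dropped --- is precisely this connecting homomorphism $\Der_{V,p}\to\Ext^1_\cO(\Omega^1_{V,p}(\log D),\cO)$: without its vanishing, equality of the form modules only forces $\Spla_p(D_1,D_2)$ to be a submodule of a possibly nonzero $\Ext^1$, so it need not imply splayedness (cf.~Example~\ref{Ex:additivomega}). The main care needed is to check that the cokernel produced by the dualization is the splayedness module itself, consistently with Proposition~\ref{prop:Extcond}.
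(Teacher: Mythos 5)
Your proposal is correct, and its interesting divergence from the paper is in the forward implication. The paper proves that splayedness forces $\Omega^1_{V,p}(\log D_1)+\Omega^1_{V,p}(\log D_2)=\Omega^1_{V,p}(\log D)$ by an explicit computation in splayed coordinates: writing a logarithmic form along $D$ over the denominator $gh$, the holomorphy of $d(gh)\wedge\omega$ forces divisibility of the numerator coefficients, so $\omega$ splits as $\omega_g+\omega_h$ with $\omega_g\in\Omega^1_{V,p}(\log D_1)$ and $\omega_h\in\Omega^1_{V,p}(\log D_2)$. You instead transport the derivation-side criterion (Proposition~15 of \cite{Faber12} together with Seidenberg's theorem) through Saito duality, dualizing the short exact sequence of derivation modules and using $\Ext^1_\cO(\Der_{V,p},\cO)=0$ to get surjectivity on forms. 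This is valid, with one point that deserves to be made explicit: under the identifications $\Der_{V,p}(-\log \,\cdot\,)^\vee\cong\Omega^1_{V,p}(\log \,\cdot\,)$ of \cite{Saito80}, Corollary~1.7, the dualized maps must be checked to be the natural diagonal and sum maps; this holds because all the pairings involved are contraction of meromorphic forms against vector fields, compatibly with all inclusions. What your route buys is a clean conceptual explanation of the asymmetry in the statement, which you rightly emphasize: the forward direction is unconditional because the obstruction $\Ext^1_\cO(\Der_{V,p},\cO)$ vanishes automatically (freeness of $\Der_{V,p}$), while the converse needs the non-automatic vanishing of $\Ext^1_\cO(\Omega^1_{V,p}(\log D),\cO)$, and Example~\ref{Ex:additivomega} shows this hypothesis is genuinely needed. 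What the paper's computation buys is independence of the forward direction from the derivation-side machinery, plus an explicit formula for the splitting. Your converse argument is in substance the paper's: the paper invokes Proposition~\ref{prop:Extcond} directly, whereas you re-derive the same embedding $\Spla_p(D_1,D_2)\hookrightarrow\Ext^1_\cO(\Omega^1_{V,p}(\log D),\cO)$ by dualizing the form sequence; the two computations coincide.
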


\begin{remark}\label{Rem:aboutOme}
The condition in the statement is clearly equivalent to the condition that the
inclusion $\Omega^1_{V,p}(\log D_1) + \Omega^1_{V,p}(\log D_2)
\subseteq \Omega^1_{V,p}(\log D)$ be an equality. 
Example~\ref{Ex:additivomega} below shows that there are non-splayed
divisors for which this condition does hold. Thus the situation for logarithmic
{\em differentials\/} {\em vis-a-vis\/} splayedness appears to be less straightforward 
than for logarithmic {\em derivations.\/} Also see~Remark~\ref{rem:additivomega}.
\qede\end{remark}

\begin{proof}
To show that splayedness implies the stated condition, assume that $D_1$ 
and~$D_2$ are splayed at $p$ and defined by $g=g(z_1, \ldots, z_p)$ and 
$h=h(z_{p+1}, \ldots, z_n)$. Since $g$ and $h$ are reduced,
we may assume that there exist indices $i$, resp., $j$
such that $g$ and $\partial_{x_i}g$, resp., $h$ and $\partial_{x_j}h$ have
no common factors.
By definition, a meromorphic differential one-form has logarithmic poles along 
$D=D_1 \cup D_2$ if $gh \omega$ and $d(gh) \wedge \omega$ are holomorphic. Writing 
$$\omega= \frac{\sum_{i=1}^p a_i dz_i + \sum_{j=p+1}^n b_j dz_j}{gh},$$ 
the second condition yields that each $a_i$ divisible by $h$, i.e., $a_i=h \tilde a_i$ 
(and similarly
each $b_j=g \tilde b_j$) for some $\tilde a_i, \tilde b_j \in \cO$. Hence $\omega$ is of the form 
$\omega_g+\omega_h=\frac{\sum_{i=1}^p \tilde a_i dz_i}{g}+\frac{\sum_{j=p+1}^{n} 
\tilde b_j dz_j}{h}$. It is easy to see that $\omega_g \in \Omega^1_{V,p}(\log D_1)$ 
and $\omega_h \in \Omega^1_{V,p}(\log D_2)$.

To prove that the stated condition implies splayedness if 
$\Ext^1_\cO(\Omega^1_{V,p}(\log D),\cO)=0$, we use the
exact sequence in Proposition~\ref{prop:Extcond}.
If $\Omega^1_{V,p}(\log D_1) + \Omega^1_{V,p}(\log D_2) = \Omega^1_{V,p}(\log D)$,
then $\Ext^1_\cO(\Omega^1_{V,p}(\log D_1) + \Omega^1_{V,p}(\log D_2),\cO) 
= \Ext^1_\cO(\Omega^1_{V,p}(\log D),\cO)=0$. In this case the exact sequence in 
Proposition~\ref{prop:Extcond} becomes
\[
\xymatrix{
0 \ar[r] &
\Spla_p(D_1,D_2) \ar[r] & 
0 \ar[r] &
\Ext^1_{\cO}(\Omega^1_V(\log D_1)\oplus \Omega^1_V(\log D_2),\cO)
\ar[r] & 0
}
\]
and forces the splayedness module to vanish, concluding the proof.
\end{proof}

\begin{corol}
If $C_1$, $C_2$ are curves without common components on a nonsingular
surface $S$, then $C_1$ and $C_2$ are splayed at $p$ if and only if
\[
\Omega_{S,p}(\log C_1)+\Omega_{S,p}(\log C_2) = \Omega_{S,p}(\log (C_1\cup C_2))\quad.
\]
\end{corol}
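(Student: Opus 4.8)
The plan is to invoke Theorem~\ref{Thm:Omegamodcrit}, whose statement already gives the ``only if'' direction unconditionally; the only thing needed to obtain a genuine equivalence in this setting is to verify that the $\Ext^1$ hypothesis is automatically satisfied for curves on a surface. Concretely, since $S$ is a nonsingular surface, $\dim S = 2$, and I would argue that every reduced divisor $C = C_1\cup C_2$ on $S$ is free at $p$; by the criterion recalled after the definition of freeness, this means $\Omega^1_{S,p}(\log C)$ is a free $\cO_{S,p}$-module, whence $\Ext^1_{\cO}(\Omega^1_{S,p}(\log C),\cO) = 0$ and Theorem~\ref{Thm:Omegamodcrit} yields the ``if'' direction as well.

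The key step is therefore the freeness of reduced plane-curve germs. I would recall that for a divisor $C$ in a smooth surface, the module $\Der_{S,p}(-\log C)$ is a reflexive module over the regular local ring $\cO_{S,p}$ of dimension~$2$; over a two-dimensional regular local ring every reflexive (equivalently, every torsion-free and normal, i.e., maximal Cohen--Macaulay) module is free. Since $\Der_{S,p}(-\log C)$ is reflexive by Saito's result cited in the excerpt (\cite{Saito80}, Corollary~1.7), it is free, and hence $(C,p)$ is free; by the duality between $\Der_{S,p}(-\log C)$ and $\Omega^1_{S,p}(\log C)$ the same holds for the module of logarithmic one-forms. This is the classical observation, due to Saito, that all plane-curve germs are free divisors.

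The main obstacle is purely a matter of citing the correct structural fact, namely that reflexive modules over a two-dimensional regular local ring are free; this is where the restriction to surfaces is used, and it is exactly the ingredient that removes the auxiliary $\Ext^1$ hypothesis from Theorem~\ref{Thm:Omegamodcrit}. Once freeness is in hand, both directions of the corollary follow immediately from that theorem together with Remark~\ref{Rem:aboutOme}, which records that the stated equality of modules is precisely the condition appearing there. \qed
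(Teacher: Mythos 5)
Your proposal is correct and follows essentially the same route as the paper: both reduce the corollary to Theorem~\ref{Thm:Omegamodcrit} by showing the $\Ext^1$ hypothesis is automatic in dimension~$2$, via reflexivity of the logarithmic module. The only cosmetic difference is the reference for the key structural fact --- you invoke the local-algebra statement that reflexive modules over a two-dimensional regular local ring are free (Saito's observation that reduced plane-curve germs are free divisors), whereas the paper cites Hartshorne's global result that the non-free locus of a reflexive sheaf on a nonsingular variety has codimension at least~$3$; these are the same fact in local and sheaf-theoretic guise.
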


\begin{proof}
Indeed, the additional condition $\Ext^1_\cO(\Omega_{S,p}(\log(C_1\cup C_2),\cO)=0$ 
is automatic in this case, since the locus along which a reflexive sheaf on a nonsingular 
variety is not free has codimension at least $3$ (\cite{MR597077}, Corollary~1.4)
and sheaves of logarithmic differentials are reflexive.
\end{proof}

\begin{example}  \label{Ex:additivomega}
Let $D=D_1 \cup D_2$ be the union of the cone $D_1=\{ h_1=x^2+y^2-z^2=0\}$ and 
the plane $D_2=\{ h_2=x=0\}$ in $(V,p)=(\Cbb^3,0)$. Then $D$ is neither splayed  
nor free at the origin. Indeed, $\Der_{V,p}(-\log D_1)$ is generated by $x\partial_x + 
y \partial_y+z\partial_z, y\partial_x -x \partial_y, z \partial_x + x \partial_z, z\partial_y 
+ y \partial_z$ and $\Der_{V,p}(-\log D_2)$ by $x \partial_x, \partial_y$, $\partial_z$; 
it follows that $\partial_x$ is not contained in $\Der_{V,p}(-\log D_1)
+\Der_{V,p}(-\log D_2)$. Thus $\Der_{V,p}(-\log D_1)+\Der_{V,p}(-\log D_2)
\ne \Der_{V,p}(-\log D)$, and hence $D$ is not splayed, by Proposition~15 
of~\cite{Faber12} (i.e., $\Spla_p(D_1,D_2)\ne 0$). 
On the other hand, it is easy to see that 
\[
\Omega^1_{V,p}(\log D_1)+\Omega_{V,p}^1(\log D_2)=\frac{dh_1}{h_1}\cO 
+ \frac{dx}{x}\cO + dy\cO +dz\cO.
\] 
By Theorem 2.9 of \cite{Saito80}, 
$\Omega_{V,p}^1(\log D)=\Omega_{V,p}^1(\log D_1)+\Omega_{V,p}^1(\log D_2)$. 
Note that it follows that $D$ cannot be free at $p$, by Theorem~\ref{Thm:Omegamodcrit}.
A computation shows that $\Der_{V,p}(-\log D)$ is minimally generated by 4 derivations,
confirming this.
\begin{figure}[!h]
\includegraphics[width=0.5 \textwidth]{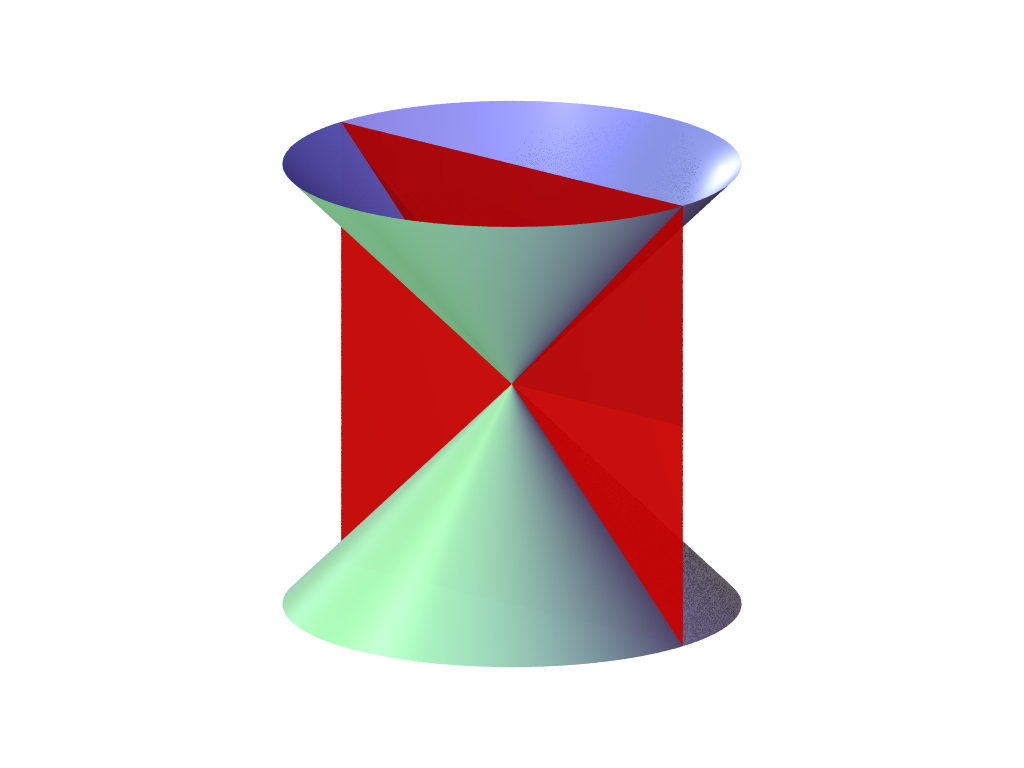}
\caption{ \label{Fig:coneplane} The cone $D_1$ and the plane $D_2$ are not splayed at the origin but satisfy $\Omega^1_V(\log D_1)+\Omega^1_V(\log D_2)=\Omega^1_V(\log D)$.}
\end{figure}
\qede\end{example}

\begin{remark}\label{rem:additivomega}
Example \ref{Ex:additivomega} shows that 
in general the first condition listed in~Theorem~\ref{Thm:Omegamodcrit} does 
not suffice to imply splayedness.
One can construct similar examples using Saito's theorem (Theorem~2.9 of \cite{Saito80}): 
let $D=\cup_{i=1}^mD_i$ at a point $p \in V$ be a divisor, where the $D_i$ are the 
irreducible components of $D$. Then Saito's theorem says that if each $D_i$ is normal, 
$D_i$ intersects $D_j$ ($i \neq j$) transversally outside a codimension $2$ set and the triple intersections $D_i \cap D_j \cap D_k$ have codimension $\geq 3$ (for $i \neq j \neq k$) then 
$$\Omega^1_{V,p}(\log D)=\sum_{i=1}^m \frac{dh_i}{h_i}\cO + \Omega_{V,p}^1.$$
This is easily seen to be equal to $\sum_{i=1}^m \Omega^1_{V,p}(\log D_i)$.
For non-splayed $D_i$
this gives examples where $\Ext^1_{\cO}(\Omega^1_V(\log D),\cO) \neq 0$.
\qede\end{remark}

\begin{remark}\label{rem:mathias}
Mathias Schulze pointed out to us that a situation similar to 
Theorem~\ref{Thm:Omegamodcrit} occurs by considering modules 
$\omega_D^\bullet$
of regular differential forms on $D$.
If $f$ defines a reduced divisor $D$, then the dual of 
the module $J'_f/(f)$ is the module $\mathcal R_D=\omega^0_D$ (\cite{GS}, Proposition~3.2). 
Dualizing the map $\iota\colon J'_{gh}/(gh) \hookrightarrow J'_g/(g) \oplus J'_h/(h)$ 
used in the proof of Theorem~\ref{Thm:Jpcrit} gives a natural inclusion
\[
\iota^\vee\colon \omega^0_{D_1} \oplus \omega^0_{D_2} \hookrightarrow \omega^0_D
\]
where $D=D_1\cup D_2$. If $D_1$ and $D_2$ are splayed, then $\iota$ is an
isomorphism, and so is~$\iota^\vee$. If $D$ is free, then the converse holds:
if $D$ is free and $\iota^\vee$ is an isomorphism, then $\omega^0_D$, 
resp.~$\omega^0_{D_1}$, $\omega^0_{D_2}$ are reflexive
(\cite{GS}, Corollary~3.5), so dualizing
$\iota^\vee$ shows that $\iota$ is also an isomorphism; it follows that
$D$ is splayed, by Theorem~\ref{Thm:Jpcrit}.
One advantage of this observation over Theorem~\ref{Thm:Omegamodcrit}
is that the module $\omega^0_D$ only depends on $D$, not on the embedding of 
$D$ in a nonsingular variety $V$.
\qede\end{remark}

\subsection{Global considerations, and Chern classes}
The global version of Theorem~\ref{Thm:Dermodcrit} is the following immediate
consequence at the level of sheaves of derivations.

\begin{theorem}\label{Thm:Dercrit}
Let $D_1$, $D_2$ be reduced divisors of $V$, without common components, and
let $D=D_1\cup D_2$. Then there is a natural monomorphism of sheaves
\[
\frac{\Der_V}{\Der_V(-\log D)} \hookrightarrow
\frac{\Der_V}{\Der_V(-\log D_1)}\oplus \frac{\Der_V}{\Der_V(-\log D_2)}\quad,
\]
and $D_1$, $D_2$ are splayed if and only if this monomorphism is an 
isomorphism.
\end{theorem}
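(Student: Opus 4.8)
The plan is to reduce the global statement entirely to its local counterpart, Theorem~\ref{Thm:Dermodcrit}, which has already been established. The key observation is that all of the constructions involved—the sheaves $\Der_V$, $\Der_V(-\log D_i)$, $\Der_V(-\log D)$, their quotients, and the morphism between them—are defined stalk-wise, so that both the existence of the monomorphism and the question of whether it is an isomorphism can be checked one point at a time. Since a morphism of coherent sheaves is a monomorphism (resp.\ an isomorphism) if and only if it induces a monomorphism (resp.\ an isomorphism) on every stalk, the global theorem follows once the corresponding statement is known on each stalk.

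Concretely, I would first construct the global morphism of sheaves. The natural inclusions $\Der_V(-\log D)\subseteq \Der_V(-\log D_i)$ of subsheaves of $\Der_V$ induce, after passing to quotients, a morphism
\[
\frac{\Der_V}{\Der_V(-\log D)} \longrightarrow
\frac{\Der_V}{\Der_V(-\log D_1)}\oplus \frac{\Der_V}{\Der_V(-\log D_2)}\quad,
\]
and the relation $\Der_V(-\log D)=\Der_V(-\log D_1)\cap \Der_V(-\log D_2)$—the global (Seidenberg) version of the intersection identity used in the proof of Theorem~\ref{Thm:Jpcrit}—guarantees that its kernel is zero, so that it is a monomorphism of sheaves. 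On stalks at an arbitrary point $p$ this morphism is precisely the monomorphism of Theorem~\ref{Thm:Dermodcrit}, since the analytic stalks of these sheaves are the corresponding modules of logarithmic derivations at $p$.

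It then remains to address the equivalence. I would argue that the global monomorphism is an isomorphism if and only if it is an isomorphism on every stalk, which by Theorem~\ref{Thm:Dermodcrit} happens exactly when $D_1$ and $D_2$ are splayed at every $p$, i.e.\ precisely when $D_1$ and $D_2$ are splayed. One direction is immediate: away from $D_1\cap D_2$ the splayedness module vanishes automatically (at least one of $D_1$, $D_2$ is locally trivial there), so the only points where the cokernel can be supported are the points of intersection, and at such points splayedness is governed by the local theorem.

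The only genuine subtlety—which I expect to be the main (though minor) obstacle—is the passage between the algebraic and analytic settings, since Theorem~\ref{Thm:Dermodcrit} is stated over $\cO=\mathbb C\{x_1,\dots,x_n\}$ while $\Der_V$ and its logarithmic subsheaves are coherent sheaves on the algebraic (or complex-analytic) variety $V$. This is handled by the faithful flatness of the analytic local ring over the algebraic one: the quotient sheaf $\Der_V/(\Der_V(-\log D_1)+\Der_V(-\log D_2))$ vanishes at $p$ if and only if its analytic stalk, namely the splayedness module $\Spla_p(D_1,D_2)$, vanishes. Because this reduction is routine, the proof can legitimately be presented as an "immediate consequence" of Theorem~\ref{Thm:Dermodcrit}, with the stalk-wise verification and the local-to-global principle for isomorphisms of coherent sheaves supplying the only content.
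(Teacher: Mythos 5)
Your proposal is correct and takes essentially the same approach as the paper: the paper offers no separate proof of Theorem~\ref{Thm:Dercrit}, stating it as an ``immediate consequence'' of Theorem~\ref{Thm:Dermodcrit} at the level of sheaves, and the unwritten details are exactly the ones you supply (the monomorphism via the global Seidenberg intersection identity, the stalk-wise criterion for isomorphism, and the automatic vanishing of the cokernel away from $D_1\cap D_2$). Your faithful-flatness remark correctly handles the algebraic-versus-analytic comparison, a point the paper silently glosses over.
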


We can also globalize the splayedness module and introduce a `splayedness
sheaf'
\[
\Spla_V(D_1,D_2):=\frac {\Der_V}{\Der_V(-\log D_1)+\Der_V(-\log D_2)}\quad.
\]
so that $D_1$ and $D_2$ are splayed if and only if $\Spla_V(D_1,D_2)$
vanishes.

\begin{example}\label{ex:curvecase1}
For reduced curves on surfaces, splayedness is equivalent to transversality at
nonsingular points; this is easily deduced from the definition of splayedness.
Namely, if $C_1$ and $C_2$ are reduced curves on a surface $S$, then $C_1$ 
and $C_2$ are splayed at a point $p$ if one can find coordinates $(x,y)$ at $p$ 
such that $C_1$ is locally defined by $g(x,0)$ and $C_2$ by $h(0,y)$. Since 
$C_1$ and $C_2$ are reduced, this is only possible when $g(x,0)$ and $h(0,y)$ 
are of the form $ux$ and $vy$ for some units $u,v \in \cO_{S,p}$.
Putting together this remark and
the results proved so far, we see that if $C_1$, $C_2$ are reduced curves 
on a compact surface $S$, and we let $C=C_1\cup C_2$, then
the following are equivalent:
\begin{itemize}
\item The natural inclusion $\Der_S(-\log C_1)+\Der_S(-\log C_2)
\hookrightarrow \Der_S$ is an equality.
\item The natural inclusion $\Omega^1_S(\log C_1)+\Omega^1_S(\log C_2)
\hookrightarrow \Omega^1_S(\log C)$ is an equality.
\item $C_1$ and $C_2$ are splayed.
\item $C_1$ and $C_2$ meet transversally at nonsingular points.
\end{itemize}
One more item will be added to this list in \S\ref{sec:CSMsplayed}:
\begin{itemize}
\item $\csm(S\smallsetminus C_1)\cdot \csm(S\smallsetminus C_2)
=c(TS)\cap \csm(S\smallsetminus C)\quad,$
\end{itemize}
see Example~\ref{sec:curvecase2}. This will use {\em Chern-Schwartz-MacPherson\/}
classes; Chern classes are our next concern.
\qede\end{example}

In the next sections we will be interested in the behavior of splayedness
{\em vis-a-vis\/} a conjectural statement on Chern classes of sheaves of
logarithmic derivations. Recall (\cite{85k:14004}, \S15.1 and B.8.3) that on 
nonsingular varieties one can define Chern classes for any coherent sheaf, 
compatibly with the splitting principle: the key fact is that every 
coherent sheaf on a nonsingular variety admits a finite resolution by locally 
free sheaves. Thus, for any hypersurface $D$ on a nonsingular variety $V$
we may consider the class
\[
c(\Der_V(-\log D))
\]
in the Chow ring of $V$, or its counterpart $c(\Der_V(-\log D))\cap [V]$
in the Chow group of $V$. (The reader will lose very little by considering
these classes in the cohomology, resp.~homology of $V$.)

In terms of Chern classes, Theorem~\ref{Thm:Dercrit} has the following immediate
consequence:

\begin{corol}\label{cor:Chern}
Let $D_1$, $D_2$ be reduced divisors of $V$, without common components, and
let $D=D_1\cup D_2$. If $D_1$ and $D_2$ are splayed, then
\[
c(\Der_V(-\log D)) = \frac{c(\Der_V(-\log D_1))\cdot c(\Der_V(-\log D_2))}{c(\Der V)}
\]
in the Chow ring of $V$.
\end{corol}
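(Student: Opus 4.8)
The plan is to extract a Chern-class identity directly from the isomorphism of sheaves provided by Theorem~\ref{Thm:Dercrit}, using the multiplicativity of the total Chern class on short exact sequences (more precisely, on the short exact sequences built from the natural inclusions of the sheaves of logarithmic derivations). The key point is that splayedness lets us replace a delicate statement about the sheaf $\Der_V(-\log D)$ by a combination of statements about $\Der_V(-\log D_1)$, $\Der_V(-\log D_2)$, and $\Der_V$ individually.

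First I would invoke Theorem~\ref{Thm:Dercrit}: since $D_1$ and $D_2$ are splayed, the natural monomorphism is an isomorphism of sheaves
\[
\frac{\Der_V}{\Der_V(-\log D)} \cong
\frac{\Der_V}{\Der_V(-\log D_1)}\oplus \frac{\Der_V}{\Der_V(-\log D_2)}\quad.
\]
Because $V$ is nonsingular, every coherent sheaf admits a finite locally free resolution, so the total Chern class is defined for all the sheaves appearing, and it is multiplicative on short exact sequences (\cite{85k:14004}, \S15.1). Applying $c$ to the isomorphism above gives
\[
\frac{c(\Der_V)}{c(\Der_V(-\log D))}
= \frac{c(\Der_V)}{c(\Der_V(-\log D_1))}\cdot \frac{c(\Der_V)}{c(\Der_V(-\log D_2))}\quad,
\]
where I have used that for each of the three inclusions $\Der_V(-\log E)\hookrightarrow \Der_V$ the quotient $\Der_V/\Der_V(-\log E)$ fits into a short exact sequence, whence $c(\Der_V/\Der_V(-\log E)) = c(\Der_V)/c(\Der_V(-\log E))$, the denominators being invertible since $c$ has leading term $1$. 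Solving the displayed equation for $c(\Der_V(-\log D))$ yields exactly
\[
c(\Der_V(-\log D)) = \frac{c(\Der_V(-\log D_1))\cdot c(\Der_V(-\log D_2))}{c(\Der V)}\quad,
\]
which is the claimed identity.

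The main obstacle, if any, is purely bookkeeping: one must be sure that the total Chern class is genuinely well defined and multiplicative for these possibly non-locally-free coherent sheaves, and that the formal inversions of $c(\Der_V(-\log D_i))$ in the Chow ring are legitimate. Both are handled by the hypothesis that $V$ is nonsingular — guaranteeing finite locally free resolutions and hence the extension of Chern classes with all their formal properties (\cite{85k:14004}, \S15.1 and B.8.3) — and by the fact that each total Chern class is a unit in the Chow ring (its degree-$0$ part is $1$). Thus the corollary is an immediate formal consequence of Theorem~\ref{Thm:Dercrit}, and no further geometric input is required.
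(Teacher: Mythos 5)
Your proposal is correct and follows essentially the same route as the paper: invoke Theorem~\ref{Thm:Dercrit} to get the isomorphism of quotient sheaves, take total Chern classes using multiplicativity on the short exact sequences $0\to \Der_V(-\log E)\to \Der_V\to \Der_V/\Der_V(-\log E)\to 0$, and solve for $c(\Der_V(-\log D))$. The only difference is that you spell out the justifications (finite locally free resolutions on a nonsingular variety, invertibility of total Chern classes) that the paper leaves implicit.
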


\begin{remark}
This Chern class statement only uses the `easy' implication in the criterion for
splayedness of Theorem~\ref{Thm:Dercrit}. It does not seem likely that
splayedness can be precisely detected by a Chern class computation; of course,
Corollary~\ref{cor:Chern} may be used to prove that two divisors are {\em not\/}
splayed.
\qede\end{remark}

\begin{proof}
If $D_1$ and $D_2$ are splayed, then by Theorem~\ref{Thm:Dercrit} we have
an isomorphism
\[
\frac{\Der_V}{\Der_V(-\log D)} \cong
\frac{\Der_V}{\Der_V(-\log D_1)}\oplus \frac{\Der_V}{\Der_V(-\log D_2)}
\]
of coherent sheaves, and taking Chern classes we get
\[
\frac{c(\Der_V)}{c(\Der_V(-\log D))} =
\frac{c(\Der_V)}{c(\Der_V(-\log D_1))}\cdot \frac{c(\Der_V)}{c(\Der_V(-\log D_2))}
\]
in the Chow ring of $V$. The stated equality follows at once.
\end{proof}

\begin{example}\label{exa:SNCexample}
As an illustration of Corollary~\ref{cor:Chern}, consider a divisor $D$ with 
normal crossings and nonsingular components $D_i$. First we note that by
sequence~\eqref{seq:globalder}, if $D=D_1$ is nonsingular, then
\[
c(\Der_V(-\log D))=\frac{c(\Der_V)}{1+D}\quad,
\]
where $1+D_1$ is the common notation for $c(\cO_V(D))=1+c_1(\cO_V(D))$.
Indeed, $JD=\emptyset$ as $D$ is nonsingular, so $\cI_{JD,D}(D)=
\cO_D(D)$, and twisting the standard exact sequence for $\cO_D$ gives
the sequence
\[
\xymatrix{
0 \ar[r] &
\cO_V \ar[r] &
\cO_V(D) \ar[r] &
\cO_D(D) \ar[r] &
0\quad,
}
\]
showing that $c(\cO_D(D))=c(\cO_V(D))/c(\cO_V)=1+D$.

Now the claim is that if $D=D_1\cup \cdots \cup D_r$ is a divisor with normal
crossings and nonsingular components, then
\[
c(\Der_V(-\log D))=\frac{c(\Der_V)}{(1+D_1)\cdots (1+D_r)}\quad.
\]
This formula is in fact well-known: it may be obtained by computing explicitly 
the ideal of the singularity subscheme $JD$ and taking Chern classes of the 
corresponding sequence~\eqref{seq:globalder}. The point we want to make
is that this formula follows immediately from Corollary~\ref{cor:Chern}, 
without any explicit computation of ideals. Indeed, by the normal 
crossings condition, $D_1\cup\cdots\cup D_{i-1}$ and $D_i$ are splayed for 
all $i>1$; the formula holds for $r=1$ by the 
explicit computation given above; and for $r>1$ and induction, we have
\begin{align*}
c(\Der_V(-\log D)) &=\left. \frac{c(\Der_V)}{(1+D_1)\cdots (1+D_{r-1})}
\cdot \frac{c(\Der_V)}{(1+D_r)}\right/ c(\Der_V) \\
&=\frac{c(\Der_V)}{(1+D_1)\cdots (1+D_r)}
\end{align*}
as claimed.
\qede\end{example}

%%%

\section{Chern-Schwartz-MacPherson classes for splayed divisors
and subvarieties}\label{sec:CSMsplayed}

\subsection{CSM classes}\label{CSMintro}
There is a theory of Chern classes for possibly singular, possibly noncomplete
varieties, normalized so that the class for a nonsingular compact variety~$V$
equals the total Chern class $c(TV)\cap [V]$ of the tangent bundle of $V$,
in the Chow group (or homology), and satisfying a strict functoriality requirement. 
This theory was developed by R.~MacPherson (\cite{MR0361141}); 
\S19.1.7 in~\cite{85k:14004} contains an efficient summary of MacPherson's 
definition. These `Chern classes' were found to agree with a notion defined 
in remarkable earlier work by M.-H.~Schwartz (\cite{MR35:3707, MR32:1727}) 
aimed at extending
the Poincar\'e-Hopf theorem to singular varieties; they are usually called
{\em Chern-Schwartz-MacPherson\/} ($\csm$) classes. A $\csm$ class
$\csm(\varphi)$ is defined for every constructible function $\varphi$ on a 
variety; the key functoriality of these classes prescribes that if $f: V\to W$ is
a proper morphism, and $\varphi$ is a constructible function on $V$, then
$f_* \csm(\varphi) = \csm (f_* \varphi)$. Here, $f_*\varphi$ is defined by 
taking topological Euler characteristics of fibers. This covariance property
also determines the theory uniquely by resolution of singularity and the
normalization property mentioned above.

We mention here two immediate consequences of functoriality that are
useful in computations. A locally closed subset $U$ of a variety $V$ determines 
a $\csm$ class $\csm(U)$ in the Chow group of $V$: this is the $\csm$ class 
of the function $\one_U$ which takes the value~$1$ on~$U$ and $0$ on its 
complement. 
\begin{itemize}
\item If $V$ is compact and $U\subseteq V$ is locally closed, then the degree 
$\int \csm(U)$ equals the topological Euler characteristic of $U$. 
Thus, $\csm$ classes satisfy a generalized version of the Poincar\'e-Hopf
theorem,
and may be viewed as a direct generalization of the topological Euler
characteristic.
\item Like the Euler characteristic, $\csm$ classes satisfy an inclusion-exclusion 
principle: if $U_1$, $U_2$ are locally closed in $V$, then 
\[
\csm(U_1\cup U_2) = \csm(U_1) + \csm(U_2) - \csm(U_1\cap U_2)\quad.
\]
\end{itemize}

Chern classes of bundles of logarithmic derivations along a divisor with
simple normal crossings may be used to provide a definition of $\csm$ classes.
This approach is adopted in~\cite{MR2282409},~\cite{MR2209219}; 
a short summary may be found in~\S3.1 of~\cite{MR2448279}.
In this section we explore the role of splayedness in more refined (and 
still conjectural in part) relations between $\csm$ classes and Chern
classes of sheaves of logarithmic derivations.

\subsection{CSM classes of hypersurface complements}
We now consider $\csm$ classes of hypersurface complements. 
As in \S\ref{sec:splay} we will assume that $V$ is a nonsingular complex 
projective variety. In previous work, the first-named author has proposed 
a formula relating the $\csm$ class of
the complement $U=V\smallsetminus D$ of a divisor in a nonsingular
variety $V$ with the Chern class $c(\Der_V(-\log D))\cap [V]$ of the
corresponding sheaf of logarithmic derivations.

\begin{example}
Using the inclusion-exclusion formula for $\csm$ classes given above, it is 
straightforward to compute the $\csm$ class of the
complement of a divisor with simple normal crossings $D=D_1\cup \cdots
\cup D_r$ (cf.~e.g., Theorem~1 in \cite{MR2001d:14008} or 
Proposition 15.3 in \cite{MR1893006}), and verify that in this case
\begin{equation}\label{eq:conj}
\csm(V\smallsetminus D) = c(\Der(-\log D))\cap [V]
\end{equation}
by direct comparison with the class computed in Example~\ref{exa:SNCexample}.
\qede\end{example}

It is natural to inquire whether equality~\eqref{eq:conj} holds for less special
divisors. It has been verified for free hyperplane arrangements
(\cite{hyparr}) and more generally for free hypersurface arrangements that are
locally analytically isomorphic to hyperplane arrangements (\cite{hypersarr}).
Xia Liao has verified it for locally quasi-homogeneous curves on a nonsingular 
surface (\cite{Liao1}), and he has recently proved that the formula holds
`numerically' for all free and locally quasi-homogeneous hypersurfaces of 
projective varieties (\cite{Liao2}).

On the other hand, the formula is known {\em not\/} to hold in general: for
example, Liao proves that the formula does not hold for curves on surfaces
with singularities at which the Milnor and Tyurina numbers do not coincide.

\subsection{Enter splayedness}
Rather than focusing on the verification of \eqref{eq:conj} for cases not 
already covered by these results, we aim here to consider a consequence
of \eqref{eq:conj} in situations where it does hold. In Corollary~\ref{cor:Chern}
we have verified that if $D_1$ and $D_2$ are splayed, then there is a simple 
relation between the Chern classes of the sheaves of logarithmic derivations
determined by $D_1$, 
$D_2$, and $D=D_1\cup D_2$. If we assume for a moment that 
\eqref{eq:conj} holds for these hypersurfaces, we obtain a non-trivial 
relation between the $\csm$ classes of the corresponding complements. 
This relation can then be probed with independent tools, and cases in
which it is found to hold may be viewed as a consistency check for 
a general principle linking $\csm$ classes of hypersurface complements
and Chern classes of logarithmic derivations, of which \eqref{eq:conj}
is a manifestation.

\begin{prop}
Let $D_1$, $D_2$ be reduced divisors of $V$, without common components, 
and let $D=D_1\cup D_2$. Assume that $D_1$ and $D_2$ are splayed, and
that \eqref{eq:conj} holds for $D_1$, $D_2$, and $D$. Let $U$, resp.~$U_1$,
$U_2$ be the complement of $D$, resp.~$D_1$, $D_2$. Then
\begin{equation}\label{eq:template}
c(TV)\cap \csm(U) = \csm(U_1) \cdot \csm(U_2)
\end{equation}
in the Chow group of $V$.
\end{prop}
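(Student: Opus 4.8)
The plan is to reduce \eqref{eq:template} to a formal identity of Chern classes, using the hypothesis \eqref{eq:conj} to convert $\csm$ classes into cap products with Chern classes, and then invoking the multiplicativity supplied by Corollary~\ref{cor:Chern}. Write $c_1:=c(\Der_V(-\log D_1))$, $c_2:=c(\Der_V(-\log D_2))$, and $c_D:=c(\Der_V(-\log D))$, all regarded as elements of the Chow ring $A^*(V)$. By \eqref{eq:conj} we have $\csm(U_i)=c_i\cap[V]$ and $\csm(U)=c_D\cap[V]$ in the Chow group $A_*(V)$. Since $\Der_V=TV$, the left-hand side of \eqref{eq:template} becomes $c(TV)\cap(c_D\cap[V])=(c(TV)\cdot c_D)\cap[V]$, using the $A^*(V)$-module structure of $A_*(V)$.

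For the right-hand side, recall that $V$ is nonsingular, so $A_*(V)$ carries the intersection product, and the operation $\alpha\mapsto\alpha\cap[V]$ satisfies $(\alpha\cap[V])\cdot(\beta\cap[V])=(\alpha\cdot\beta)\cap[V]$ for all $\alpha,\beta\in A^*(V)$ (cf.~\cite{85k:14004}, \S8). Applying this with $\alpha=c_1$ and $\beta=c_2$ rewrites $\csm(U_1)\cdot\csm(U_2)$ as $(c_1\cdot c_2)\cap[V]$. Thus \eqref{eq:template} is equivalent to the Chow-ring identity $c(TV)\cdot c_D=c_1\cdot c_2$, capped against $[V]$.

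This last identity is exactly where splayedness enters. Corollary~\ref{cor:Chern} gives $c_D=(c_1\cdot c_2)/c(\Der_V)$ in $A^*(V)$, and since $c(\Der_V)=c(TV)$ this is precisely $c(TV)\cdot c_D=c_1\cdot c_2$. Capping both sides with $[V]$ then yields \eqref{eq:template}. I expect the only point demanding care to be the bookkeeping between the cap-product action of $A^*(V)$ on $A_*(V)$ and the intersection product on $A_*(V)$; this is legitimate precisely because $V$ is nonsingular, so that $A_*(V)$ is a ring and the two products are compatible. All three inputs—the two instances of \eqref{eq:conj} and Corollary~\ref{cor:Chern}—are granted by hypothesis, so once these identifications are in place the argument is a direct computation with no substantial obstacle.
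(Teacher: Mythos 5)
Your proof is correct and takes essentially the same route as the paper's: the paper's proof is a one-sentence appeal to Corollary~\ref{cor:Chern}, the hypothesis \eqref{eq:conj} applied to $D_1$, $D_2$, $D$, and the identification $\Der_V = TV$, which is exactly the computation you carry out. The only difference is that you make explicit the compatibility $(\alpha\cap[V])\cdot(\beta\cap[V])=(\alpha\cdot\beta)\cap[V]$ on the nonsingular variety $V$, a point the paper leaves implicit.
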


\begin{proof}
This is a direct consequence of Corollary~\ref{cor:Chern}, under the assumption
that \eqref{eq:conj} holds for all hypersurfaces, and noting that $\Der_V$ is the
sheaf of sections of the tangent bundle.
\end{proof}

In the rest of this section we are going to verify \eqref{eq:template} in a few
template situations, independently of the conjectural formula \eqref{eq:conj}. 
In fact, it can be shown that \eqref{eq:template} {\em holds
for all splayed divisors.\/} However, the proof of this fact is rather technical, 
and we hope to return to it in later work. The situations we will consider in this 
paper can be appreciated with a minimum of machinery.

\subsection{Products}\label{sec:products}
Formula~\eqref{eq:template} can in fact be stated for splayed 
{\em subvarieties,\/} or in fact arbitrary closed subsets, rather than just 
divisors. Let $V_1$, $V_2$ be nonsingular varieties, let $X_1\subseteq V_1$, 
$X_2\subseteq V_2$ be closed subsets; then $D_1=X_1\times V_2$
and $D_2=V_1\times X_2$ may be considered to be splayed at all points
of the intersection $X_1\times X_2$. In general, two closed subsets $D_1$, 
$D_2$ of a nonsingular
variety $V$ are splayed at $p\in D_1\cap D_2$ if locally analytically at $p$,
$D_1$, $D_2$ and $V$ admit the product structure detailed above.

If this local analytic description holds globally, then \eqref{eq:template} 
is a straightforward consequence of known properties of $\csm$ classes.

\begin{prop}\label{pro:products}
Formula \eqref{eq:template} holds for $D_1=X_1\times V_2$ and 
$D_2=V_1\times X_2$ in $V=V_1\times V_2$.
\end{prop}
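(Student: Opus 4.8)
The plan is to reduce \eqref{eq:template} to two standard facts about $\csm$ classes: their behavior under products, and the inclusion-exclusion principle. First I would identify the relevant complements. With $D_1=X_1\times V_2$, $D_2=V_1\times X_2$, and $V=V_1\times V_2$, the complements are $U_1=(V_1\smallsetminus X_1)\times V_2$, $U_2=V_1\times(V_2\smallsetminus X_2)$, and $U=V\smallsetminus(D_1\cup D_2)=(V_1\smallsetminus X_1)\times(V_2\smallsetminus X_2)$. The key structural input is the product formula for $\csm$ classes: for locally closed $W_1\subseteq V_1$, $W_2\subseteq V_2$, one has $\csm(W_1\times W_2)=\csm(W_1)\times\csm(W_2)$ under the external product in the Chow groups of $V_1$ and $V_2$ (this follows from the functoriality and normalization axioms recalled in~\S\ref{CSMintro}, via the multiplicativity of the Euler characteristic; see e.g.~\cite{MR2282409}).

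Next I would apply this product formula to each of the three complements. Writing $c(TV)=c(TV_1)\times c(TV_2)$ (since $TV=TV_1\boxplus TV_2$ on the product) and using the projection formula to move Chern classes across the external product, the left-hand side becomes
\[
c(TV)\cap\csm(U)=\bigl(c(TV_1)\cap\csm(V_1\smallsetminus X_1)\bigr)\times\bigl(c(TV_2)\cap\csm(V_2\smallsetminus X_2)\bigr).
\]
For the right-hand side, I would compute $\csm(U_1)$ and $\csm(U_2)$ separately: since $U_1=(V_1\smallsetminus X_1)\times V_2$ and $V_2$ is nonsingular and compact, the normalization axiom gives $\csm(V_2)=c(TV_2)\cap[V_2]$, so $\csm(U_1)=\csm(V_1\smallsetminus X_1)\times(c(TV_2)\cap[V_2])$, and symmetrically for $\csm(U_2)$. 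Multiplying these two classes in the Chow ring of $V$ then yields exactly the same external product displayed above, establishing the equality.

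The one point requiring genuine care — the main obstacle — is the product formula $\csm(W_1\times W_2)=\csm(W_1)\times\csm(W_2)$ itself, since the external product of $\csm$ classes is not literally one of the axioms. I would justify it by reduction to indicator functions and Euler characteristics: for the constructible functions $\one_{W_1}$, $\one_{W_2}$ the identity $\one_{W_1\times W_2}=\one_{W_1}\times\one_{W_2}$ holds, and one checks compatibility of MacPherson's transformation with external products, which amounts to the multiplicativity $\chi(F_1\times F_2)=\chi(F_1)\chi(F_2)$ of the Euler characteristic on fibers together with the known behavior of Chern classes under products of nonsingular ambient varieties. Once this product formula is in hand, the rest is the bookkeeping above, and the proposition follows immediately without any geometry specific to $X_1$, $X_2$.
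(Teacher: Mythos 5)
Your proof is correct, and it shares its essential engine with the paper's: both arguments hinge on the product formula for $\csm$ classes, applied to $U=(V_1\smallsetminus X_1)\times(V_2\smallsetminus X_2)$. Where you diverge is in handling the factors $U_1$ and $U_2$: the paper invokes Yokura's Verdier--Riemann--Roch theorem (Theorem~2.2 of \cite{MR1695362}) to identify $\csm((V_1\smallsetminus X_1)\times V_2)$ with $\pi_2^*c(TV_2)\cap \pi_1^*\csm(V_1\smallsetminus X_1)$, whereas you apply the product formula a second time, $\csm(U_1)=\csm(V_1\smallsetminus X_1)\otimes\csm(V_2)$, and then use the normalization $\csm(V_2)=c(TV_2)\cap[V_2]$. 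Your route is slightly more economical in its external inputs (one theorem instead of two), at the cost of needing the compatibility $(\alpha_1\otimes\alpha_2)\cdot(\beta_1\otimes\beta_2)=(\alpha_1\cdot\beta_1)\otimes(\alpha_2\cdot\beta_2)$ of the exterior product with the intersection product on $V_1\times V_2$, together with the observation (which the paper itself uses in the proof of Proposition~\ref{pro:joins}) that intersecting with $c(TV_i)\cap[V_i]$ is the same as capping with $c(TV_i)$; these are standard, so the bookkeeping goes through. Two caveats. First, your closing sketch of why the product formula itself holds understates the difficulty: compatibility of MacPherson's transformation with external products is not a formal consequence of the axioms plus multiplicativity of $\chi$ on fibers; it is a genuine theorem of Kwieci\'nski \cite{MR1158750}, reproved as Th\'eor\`eme~4.1 in \cite{MR2209219}, and these (rather than \cite{MR2282409}) are the appropriate citations. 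Since the result is in the literature, citing it is all that is needed---just do not present its proof as routine. Second, your appeal to compactness of $V_2$ is harmless here (the ambient varieties are projective in this paper), but note that the paper's use of Yokura's theorem avoids even that hypothesis.
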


\begin{proof}
Let $U_1=V\smallsetminus D_1$, $U_2=V\smallsetminus D_2$, $U=
V\smallsetminus D$. In the special situation of this proposition,
\begin{align*}
U &=(V\smallsetminus (D_1\cup D_2))
=(V\smallsetminus D_1)\cap (V\smallsetminus D_2)
=((V_1\smallsetminus X_1)\times V_2) \cap (V_1\times (V_2\smallsetminus X_2))\\
&=(V_1\smallsetminus X_1) \times (V_2\smallsetminus X_2)
\end{align*}
Now we invoke a product formula for $\csm$ classes (\cite{MR1158750}, 
\cite{MR2209219}): by Th\'eor\`eme~4.1 in \cite{MR2209219},
\[
\csm((V_1\smallsetminus X_1) \times (V_2\smallsetminus X_2))
=\csm(V_1\smallsetminus X_1) \otimes \csm(V_2\smallsetminus X_2)\quad,
\] 
where $\otimes$ denotes the natural morphism $A_*(V_1)\otimes A_*(V_2)
\to A_*(V_1\times V_2)=A_*(V)$ sending $\alpha_1\otimes \alpha_2$
to $(\pi_1^* \alpha_1)\cdot (\pi_2^*\alpha_2)$, where $\pi_1$, resp.~$\pi_2$ 
is the projection from $V_1\times V_2$ to the first, resp.~second factor. 
As $c(TV)=\pi_1^* c(TV_1)\cap \pi_2^* c(TV_2)$,
\[ 
c(TV)\cap \csm(U)=(\pi_2^*c(TV_2)\cap \pi_1^* \csm(V_1\smallsetminus X_1))
\cdot (\pi_1^*c(TV_1)\cap \pi_1^* \csm(V_2\smallsetminus X_2))\quad.
\]
Finally we note that by Theorem~2.2 in~\cite{MR1695362}
\[
\pi_2^*c(TV_2)\cap \pi_1^* \csm(V_1\smallsetminus X_1) = 
\csm((V_1\smallsetminus X_1)\times V_2) = \csm(V\smallsetminus D_1)
\]
and similarly for the other factor, concluding the proof.
\end{proof}

\begin{remark}\label{re:gentoclosed}
The fact that \eqref{eq:template} generalizes to complements of more general
closed subsets is not surprising, as it is a formal consequence of the formulas 
for complements of divisors.
\qede\end{remark}

\begin{remark}
It is natural to expect that one could now deduce the validity of \eqref{eq:template}
for arbitrary splayed divisors from Proposition~\ref{pro:products} and some
mechanism obtaining intersection-theoretic identities from local analytic data.
J\"org Sch\"urmann informs us that his Verdier-Riemann-Roch theorem
(\cite{Schup}) can be
used for this purpose;
our proof of \eqref{eq:template}
for all splayed divisors (presented elsewhere) relies on different tools.
\qede\end{remark}

\subsection{Joins}\label{sec:joins}
The classical construction of `joins' in projective space (see e.g.,~\cite{MR1182558})
gives another class of examples of splayed divisors and subvarieties for which
\eqref{eq:template} can be reduced easily to known results. We deal directly with the
case of subvarieties, cf.~Remark~\ref{re:gentoclosed}.
We recall that 
the {\em join\/} of two disjoint subvarieties of projective space is the union of the
lines incident to both. For example, the ordinary cone with vertex a point $p$ and
directrix a subvariety $X$ is the join $J(p,X)$.

Let $X_1\subseteq \Pbb^{m-1}$, $X_2\subseteq \Pbb^{n-1}$ be nonempty
subvarieties, and view $\Pbb^{m-1}$, $\Pbb^{n-1}$ as disjoint subspaces of
$V=\Pbb^{m+n-1}$. Let $D_1=J(X_1,\Pbb^n)$, resp.~$D_2=J(\Pbb^m,X_2)$ 
be the corresponding joins. The intersection $J(X_1,X_2)$ of $D_1$ 
and $D_2$ is the union of the set of lines in $\Pbb^{m+n-1}$ connecting points 
of $X_1$ to points of $X_2$. The subsets $D_1$, $D_2$ are evidently splayed 
along their intersection $J(X_1,X_2)$; but note that $V$ is not a product, so 
Proposition~\ref{pro:products} does not apply in this case.

\begin{prop}\label{pro:joins}
Formula~\eqref{eq:template} holds for $D_1$, $D_2$, $V=\Pbb^{m+n-1}$ as above.
\end{prop}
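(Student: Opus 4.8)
The plan is to identify the three complements as total spaces of bundles over the two factors, compute their $\csm$ classes by functoriality from a suitable resolution, and then verify \eqref{eq:template} by a direct computation in $A_*(\Pbb^{m+n-1})$. First I would fix homogeneous coordinates splitting $V=\Pbb^{m+n-1}$ as $[u:v]$ with $u=(u_0,\dots,u_{m-1})$, $v=(v_0,\dots,v_{n-1})$, so that $A:=\Pbb^{m-1}=\{v=0\}$ and $B:=\Pbb^{n-1}=\{u=0\}$ are the two disjoint subspaces. Then $D_1$ is the cylinder $\{[u:v]:u\in\widehat{X_1}\}$ over $X_1$ (cut out by the equations of $X_1$ in the $u$ variables alone), and similarly $D_2=\{v\in\widehat{X_2}\}$, where $\widehat{\ }$ denotes the affine cone. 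Consequently the projection $[u:v]\mapsto[u]$ exhibits $U_1=V\smallsetminus D_1$ as a rank-$n$ affine bundle over $S_1:=\Pbb^{m-1}\smallsetminus X_1$ (note $U_1\cap B=\emptyset$, since $B\subseteq D_1$), $U_2$ as a rank-$m$ affine bundle over $S_2:=\Pbb^{n-1}\smallsetminus X_2$, and $U=U_1\cap U_2$ as a $\Cbb^*$-bundle over $S_1\times S_2$, with $U\subseteq V\smallsetminus(A\cup B)$.

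To compute these $\csm$ classes I would pass to the resolution $\phi\colon\mathbb J\to V$ obtained by blowing up $A\sqcup B$, which is simultaneously a $\Pbb^1$-bundle $q\colon\mathbb J=\Pbb(S_A\oplus S_B)\to P:=\Pbb^{m-1}\times\Pbb^{n-1}$, where $S_A,S_B$ are the two tautological line bundles; $\phi$ is induced by $S_A\oplus S_B\hookrightarrow\cO^{m+n}$, its exceptional divisors are the disjoint sections $E_A,E_B$, and it is an isomorphism over $V\smallsetminus(A\cup B)$. The preimages are then transparent: $\phi^{-1}(U)=q^{-1}(S_1\times S_2)\smallsetminus(E_A\cup E_B)$ maps isomorphically to $U$, while $\phi^{-1}(U_1)=q^{-1}(S_1\times\Pbb^{n-1})\smallsetminus E_B$ maps to $U_1$ by contracting $E_A$ along the $\Pbb^{n-1}$ direction. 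By MacPherson functoriality, $\csm(U)=\phi_*\csm(\one_{\phi^{-1}(U)})$ with no correction term, whereas $\phi_*\one_{\phi^{-1}(U_1)}=\one_{U_1}+(n-1)\one_{A\smallsetminus X_1}$ because the collapsed fibre over a point of $A\smallsetminus X_1$ is a $\Pbb^{n-1}$ with $\chi=n$; symmetrically for $U_2$. (Alternatively $\csm(U_1)$ may be computed from the single blow-up $\mathrm{Bl}_B V$, a $\Pbb^n$-bundle over $\Pbb^{m-1}$ on which $\phi$ is an isomorphism over all of $U_1$, avoiding this correction.)

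Each $\csm$ class on $\mathbb J$ I would then evaluate by inclusion--exclusion together with the multiplicativity of $\csm$ along the smooth fibration $q$, namely $\csm(\one_{q^{-1}(T)})=c(T_q)\cap q^*\csm(\one_T)$ for locally closed $T\subseteq P$; this is the projective-bundle instance of the product formula used in Proposition~\ref{pro:products} (Th\'eor\`eme~4.1 of~\cite{MR2209219} and Theorem~2.2 of~\cite{MR1695362}). The sections $E_A,E_B$ and the base classes $\csm(S_1)$, $\csm(S_2)$, $\csm(S_1\times S_2)=\csm(S_1)\otimes\csm(S_2)$ are handled in the same way, with $\csm(S_i)$ obtained from $\csm(\Pbb^{\bullet})-\csm(X_i)$. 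Pushing the resulting classes forward by $\phi_*$ into $A_*(V)=\Cbb[H]/(H^{m+n})$ and substituting into \eqref{eq:template}, with $c(TV)=(1+H)^{m+n}$, reduces the statement to an identity among the images of $\csm(X_1)$ and $\csm(X_2)$, which I would then verify directly.

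The main obstacle is the intersection-theoretic bookkeeping of the pushforward $\phi_*$: one must compute the images in $A_*(V)$ of the relevant monomials in the two hyperplane classes of $P$ and in the relative $\cO(1)$ of $q$ (equivalently, the Segre-class contributions of $S_A\oplus S_B$ and of the exceptional sections), and then check that the correction terms $(n-1)\csm(A\smallsetminus X_1)$ and $(m-1)\csm(B\smallsetminus X_2)$ cancel against the bundle contributions so that the product identity \eqref{eq:template} closes. A secondary point to pin down is the fibrewise multiplicativity of $\csm$ for the nontrivial $\Pbb^1$-bundle $q$: this is the projective-bundle form of the covariance property, and can be deduced from the product case via the Verdier-Riemann-Roch property of $\csm$ classes.
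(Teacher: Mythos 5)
Your route is genuinely different from the paper's, and the geometric groundwork is correct: the identification of $U_1$, $U_2$, $U$ as bundle-like spaces over $S_1$, $S_2$, $S_1\times S_2$, the description of the blow-up along $A\sqcup B$ as the $\Pbb^1$-bundle $\Pbb(S_A\oplus S_B)$ over $\Pbb^{m-1}\times\Pbb^{n-1}$ with the two exceptional sections, and the pushforward computation $\phi_*\one_{\phi^{-1}(U_1)}=\one_{U_1}+(n-1)\one_{A\smallsetminus X_1}$ are all right, and the tools you invoke (MacPherson functoriality, the product formula of \cite{MR2209219}, and Verdier--Riemann--Roch for the smooth morphism $q$, for which \cite{Schup} suffices) are legitimately available. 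For comparison, the paper avoids resolutions entirely: by inclusion-exclusion and the normalization $\csm(V)=c(TV)\cap[V]$ (\S\ref{CSMintro}), formula \eqref{eq:template} is equivalent to the single identity $\csm(D_1)\cdot\csm(D_2)=c(TV)\cap\csm(J(X_1,X_2))$; writing $\csm(X_1)$, $\csm(X_2)$ as polynomials $\alpha$, $\beta$ in the hyperplane class, it then quotes Example~6.1 of \cite{MR2782886} for the classes of the cones, $\csm(D_1)=(1+H)^n(\alpha(H)+H^m)\cap[V]$ and $\csm(D_2)=(1+H)^m(\beta(H)+H^n)\cap[V]$, and Theorem~3.13 of \cite{MR2782886} for $\csm(J(X_1,X_2))$, after which the identity is immediate from $(1+H)^n(1+H)^m=c(TV)$ modulo $H^{m+n}$.

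That said, what you have written is a blueprint rather than a proof: the step you defer as ``the main obstacle'' --- the pushforward bookkeeping and the cancellation of the correction terms $(n-1)\csm(A\smallsetminus X_1)$ and $(m-1)\csm(B\smallsetminus X_2)$ against the bundle contributions --- is precisely where \eqref{eq:template} gets established, and nothing in your text verifies it. Everything before that point is a correct reduction of the proposition to an unproved identity in $A_*(V)$; asserting that the identity ``closes'' is assuming the conclusion. The gap is fillable with your own machinery (pushing forward from $\Pbb(S_A\oplus S_B)$ is essentially how the cone and join formulas of \cite{MR2782886} are derived in the first place), but the efficient way to finish is to follow the paper's reduction: first use inclusion-exclusion to trade the three complements for the three closed sets $D_1$, $D_2$, $J(X_1,X_2)$ --- this eliminates the $\Cbb^*$-bundle $U$ and all correction terms from the computation --- and then either run your resolution once to obtain the three classes in closed form, or simply cite Example~6.1 and Theorem~3.13 of \cite{MR2782886}.
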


\begin{proof}
We have to compare
\begin{multline*}
\csm(V\smallsetminus D_1)\cdot \csm(V\smallsetminus D_2)
=(\csm(V)-\csm(D_1))\cdot (\csm(V)-\csm(D_2)) \\
=\csm(V)\cdot \csm(V) - \csm(V)\cdot (\csm(D_1)+\csm(D_2)) + \csm(D_1)\cdot \csm(D_2)
\end{multline*}
and
\begin{multline*}
c(TV)\cap \csm(V\smallsetminus (D_1\cup D_2))
=c(TV)\cap (\csm(V)-(\csm(D_1)+\csm(D_2))+\csm(D_1\cap D_2)) \\
=c(TV)\cap \csm(V) - c(TV)\cap (\csm(D_1)+\csm(D_2)) + 
c(TV)\cap \csm(D_1\cap D_2)\,.
\end{multline*}
By the basic normalization of $\csm$ classes (cf.~\S\ref{CSMintro}), capping
with $c(TV)$ is the same as taking the intersection product with $\csm(V)$.
Since $D_1\cap D_2=J(X_1,X_2)$, we are reduced to verifying that
\[
\csm(D_1)\cdot \csm(D_2) = c(TV)\cap \csm(J(X_1\cap X_2))\quad.
\]
The classes $\csm(X_1)\in A_*\Pbb^{m-1}$, resp.~$\csm(X_2)\in A_*\Pbb^{n-1}$ 
may be written as polynomials $\alpha$, resp.~$\beta$ of degree $< m$,
resp.~$n$ in the hyperplane class in these subspaces. Denoting by $H$ the 
hyperplane class in $V=\Pbb^{m+n-1}$,
we obtain formulas for $\csm(D_1)$, $\csm(D_2)$ in $A_*V$ by applying
Example~6.1 in \cite{MR2782886} (and noting that $H^{n+m}=0$ 
in~$A_*\Pbb^{m+n-1}$):
\begin{align*}
\csm(D_1) &= (1+H)^n (\alpha(H)+H^m)\cap [\Pbb^{m+n-1}]\\
\csm(D_2) &= (1+H)^m (\beta(H)+H^n)\cap [\Pbb^{m+n-1}]\quad.
\end{align*}
Therefore
\begin{align*}
\csm(D_1)\cdot \csm(D_2)
&=(1+H)^{m+n}(\alpha(H)+H^m)\cdot (\beta(H)+H^{m+n})\cap [\Pbb^{m+n-1}] \\
&=c(T \Pbb^{m+n-1})\cap 
\left((\alpha(H)+H^m)\cdot (\beta(H)+H^{m+n})\cap [\Pbb^{m+n-1}]\right)
\end{align*}
This equals $c(TV)\cap \csm(J(X_1,X_2))$ by Theorem~3.13 in~\cite{MR2782886},
again noting that $H^{m+n}=0$ in $A_*\Pbb^{m+n-1}$.
\end{proof}

\subsection{Splayed curves}\label{sec:curvecase2}
Finally, we deal with the case of splayed curves on surfaces. In this case,
\eqref{eq:template}
is a {\em characterization\/} of splayedness (cf.~Example~\ref{ex:curvecase1}).

Let $C_1$ and $C_2$ be reduced curves on a nonsingular compact
surface $S$, and let $C=C_1\cup C_2$. 

\begin{prop}\label{pro:curves}
$C_1$ and $C_2$ are splayed on $S$ if and only if \eqref{eq:template} holds,
that is, $c(TS)\cap \csm(S\smallsetminus C) = \csm(S\smallsetminus C_1)\cdot
\csm(S\smallsetminus C_2)$.
\end{prop}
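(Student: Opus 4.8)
The plan is to expand both sides of \eqref{eq:template} via inclusion–exclusion and reduce the stated equivalence to a statement about local intersection multiplicities. First I would set $U_1=S\smallsetminus C_1$, $U_2=S\smallsetminus C_2$, and $U=S\smallsetminus C=U_1\cap U_2$, so that $\csm(U_i)=\csm(S)-\csm(C_i)$ and, by inclusion–exclusion for $\csm$ classes, $\csm(U)=\csm(S)-\csm(C_1)-\csm(C_2)+\csm(C_1\cap C_2)$. Using the normalization $\csm(S)=c(TS)\cap[S]$, capping with $c(TS)$ coincides with the intersection product with $\csm(S)$ (exactly as in the proof of Proposition~\ref{pro:joins}). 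Expanding the product $\csm(U_1)\cdot\csm(U_2)=(\csm(S)-\csm(C_1))\cdot(\csm(S)-\csm(C_2))$ and the capped class $c(TS)\cap\csm(U)=\csm(S)\cdot\csm(U)$, the common terms $\csm(S)\cdot\csm(S)$, $\csm(S)\cdot\csm(C_1)$, and $\csm(S)\cdot\csm(C_2)$ cancel between the two, and \eqref{eq:template} reduces to the single identity
\[
\csm(S)\cdot\csm(C_1\cap C_2)=\csm(C_1)\cdot\csm(C_2)
\]
in $A_*(S)$.

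Next I would evaluate the two sides of this identity. Since $C_1$ and $C_2$ have no common component, $Z:=C_1\cap C_2$ is a finite reduced set of points, so $\csm(Z)=\sum_{p\in Z}[p]$. Writing $\csm(S)=[S]+(\text{lower-dimensional terms})$, the fundamental class $[S]$ acts as the identity for intersection, while the lower-dimensional parts carry a zero-cycle into negative dimension; hence $\csm(S)\cdot[p]=[p]$ and the left-hand side equals $\sum_{p\in Z}[p]$. For the right-hand side, write $\csm(C_i)=[C_i]+\gamma_i$ with $[C_i]\in A_1(S)$ and $\gamma_i\in A_0(S)$; on the surface $S$ every product $[C_i]\cdot\gamma_j$ and $\gamma_1\cdot\gamma_2$ lands in negative dimension and vanishes, so $\csm(C_1)\cdot\csm(C_2)=[C_1]\cdot[C_2]=\sum_{p\in Z}m_p[p]$, where $m_p=(C_1\cdot C_2)_p\ge 1$ is the local intersection multiplicity. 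Thus the displayed identity is equivalent to $\sum_{p\in Z}(m_p-1)[p]=0$.

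Finally I would draw the conclusion in both directions. If $C_1$ and $C_2$ are splayed, then by Example~\ref{ex:curvecase1} they meet transversally at nonsingular points, so every $m_p=1$, the last relation holds, and \eqref{eq:template} follows. Conversely, \eqref{eq:template} forces $\sum_{p\in Z}(m_p-1)[p]=0$ in $A_0(S)$; applying the degree map, available since $S$ is compact, yields $\sum_{p\in Z}(m_p-1)=C_1\cdot C_2-\#Z=0$, and as each $m_p\ge 1$ this forces $m_p=1$ for all $p$, i.e.\ transversal intersection at nonsingular points, which is splayedness by Example~\ref{ex:curvecase1}. I expect the only delicate point to be precisely this converse: the vanishing $\sum_{p\in Z}(m_p-1)[p]=0$ is merely a rational-equivalence relation and the point classes $[p]$ need not be independent in $A_0(S)$, so one cannot read off $m_p=1$ termwise; passing to degrees, justified by the compactness of $S$, is the step that makes the argument go through.
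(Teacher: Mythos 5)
Your proof is correct and follows essentially the same route as the paper's: expand both sides of \eqref{eq:template} by inclusion--exclusion, cancel common terms to reduce to the identity $\csm(C_1)\cdot\csm(C_2)=c(TS)\cap\csm(C_1\cap C_2)$, i.e.\ to $C_1\cdot C_2=[C_1\cap C_2]$ in $A_0(S)$, and then invoke the equivalence of splayedness with transversal intersection at nonsingular points (Example~\ref{ex:curvecase1}). The only difference is that you spell out the degree argument needed in the converse direction to pass from the rational-equivalence relation $\sum_p(m_p-1)[p]=0$ to $m_p=1$ for all $p$, a step the paper's proof leaves implicit when it asserts that $C_1\cdot C_2=[C_1\cap C_2]$ holds if and only if the curves meet transversally at nonsingular points.
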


\begin{proof}
We have
\[
\csm(S\smallsetminus C_i) = c(TS)\cap [S] - [C_i] - \chi_i
\]
for $i=1,2$, where $\chi_i$ is a class in dimension~$0$ (whose degree is
the topological Euler characteristic of $C_i$). By inclusion-exclusion,
\begin{align*}
\csm(C) &= \csm(C_1)+\csm(C_2)-\csm(C_1\cap C_2) \\
&=[C_1]+[C_2]+\chi_1+\chi_2-[C_1\cap C_2]\quad,
\end{align*}
and hence
\[
\csm(S\smallsetminus C) = c(TS)\cap [S] - [C_1] - [C_2] - \chi_1 - \chi_2 + [C_1\cap C_2]
\quad.
\]
It then follows at once that
\[
\csm(S\smallsetminus C_1) \cdot \csm(S\smallsetminus C_2) 
-c(TS)\cap \csm(S\smallsetminus C) = C_1\cdot C_2 - [C_1\cap C_2]\quad.
\] 
Therefore, in this case \eqref{eq:template} is verified if and only if $C_1\cdot C_2 =
[C_1\cap C_2]$, that is, if and only if $C_1$ and $C_2$ meet transversally at 
nonsingular points.
As we have recalled in \S\ref{sec:splay}, this condition is equivalent to the requirement
that $C_1$ and $C_2$ are splayed, verifying~\eqref{eq:template} in this case and 
proving that this identity {\em characterizes\/} splayedness for curves
on surfaces.
\end{proof}

\begin{remark}\label{re:transversality}
In the proof of both Propositions~\ref{pro:joins} and~\ref{pro:curves} we have used
the fact that~\eqref{eq:template} is equivalent to the identity
\begin{equation}\label{eq:transversality}
\csm(D_1)\cdot \csm(D_2) = c(TV)\cap \csm(D_1\cap D_2) \quad.
\end{equation}
If $D_1$ and $D_2$ are nonsingular subvarieties of $V$ intersecting properly
and transversally (so that $D_1\cap D_2$ is nonsingular, of the expected 
dimension), then \eqref{eq:transversality} is precisely the expected relation
between the Chern classes of the tangent bundles of $D_1$, $D_2$, and $D_1
\cap D_2$. The results of the previous sections verify~\eqref{eq:transversality}
for several classes of splayed subvarieties (nonsingular or otherwise), and 
we will prove elsewhere that in fact \eqref{eq:transversality} holds in general
for the intersection of two splayed subvarieties.
This reinforces the point of view taken by the second author in~\cite{Faber12},
to the effect that splayedness is an appropriate generalization of transversality
for possibly singular varieties. For another situation in which the formula
holds when one (but not both) of the hypersurfaces is allowed to be singular,
see Theorem~3.1 in~\cite{ccdc}.
\qede\end{remark}

%%%

\end{document}